\documentclass[12pt]{article}
\usepackage{graphicx} 
\usepackage[margin=1in]{geometry}
\usepackage{amsfonts, mathrsfs, mathtools}
\usepackage{amsthm}
\usepackage{amsmath}
\usepackage{amssymb}
\usepackage{commath}
\usepackage{tikz}
\usetikzlibrary{matrix}
\usepackage{tikz-cd}
\usepackage{hyperref}
\usepackage{enumerate}
\usepackage{fullpage}
\usepackage{indentfirst}

\def\Q{\mathbb Q}
\def\R{\mathbb R}

\def\c{\mathfrak{c}}

\def\E{\mathcal{O}_E}

\def\O{\text{O}}

\def\a{\mathfrak{a}}

\numberwithin{equation}{section}
\newtheorem{thm}[equation]{Theorem}
\newtheorem{conj}[equation]{Conjecture}
\newtheorem{lem}[equation]{Lemma}
\newtheorem{defn}[equation]{Definition}
\newtheorem{rmk}[equation]{Remark}

\newcommand{\gal}[1]{\mathrm{Gal}( #1)}
\newcommand{\D}[2]{\mathrm{Disc}(#1 / #2 )}
\newcommand{\ds}[1]{\mathrm{Disc}( #1 )}
\newcommand{\n}[3]{\mathrm{Nm}_{#2/ #3}(\mathrm{Disc}({#1/ #2})}
\newcommand{\rcg}[2]{\mathrm{Cl}_{#1} [#2] }

\newcommand{\z}[4]{\#\mathcal{F}_{#4} (#1, \, #2 , \, #3)}
\newcommand{\wt}[4]{\mathcal{F}_{#4} (#1, \, #2 , \, #3)}
\newcommand{\y}[3]{\mathcal{F}_{#3} (#1, \, #2 )}

\newcommand{\ind}[1]{\mathrm{\mathrm{ind}}( #1)}

\allowdisplaybreaks

\title{A power-saving error term in counting $C_2 \wr H$ extensions of an arbitrary base field parametrized by discriminants}
\author{Arijit Chakraborty}

\begin{document}

\maketitle

\begin{abstract}
We study Malle's conjecture for the group $C_2 \wr H$ where $H$ is a permutation group. Malle's conjecture for this case was proved by J{\"u}rgen Kl{\"u}ners in \cite{kluners} under mild conditions for $H$. In this article, we provide an alternative method to obtain the explicit main term and a power-saving error term for $C_2 \wr H$ extensions of an arbitrary number field. Furthermore, our method allows us to relax the assumptions for $H.$
\end{abstract}

\section{Introduction}
One of the most important problems in arithmetic statistics is to count the number of number field extensions of an arbitrary base field with bounded discriminant, and the Galois group being isomorphic to a fixed permutation group $G$. To be precise, we ask the following question:
\par \textbf{Question.} Fix a number field $F$ and an algebraic closure $\overline{\Q}$ of $\Q$ containing $F.$ For a field extension $K/F$ of degree $n$, let $\widetilde{K}$ denote the Galois closure of $K/F.$ The Galois group $\gal{\widetilde{K}/F}$ together with the action of permuting the $n$ embeddings of $K$ into $\overline{\Q}$ can be viewed as a transitive permutation subgroup of $S_n.$ Suppose $G$ is a transitive permutation group of order $n.$ Fix an embedding $G \hookrightarrow S_n.$ We introduce the following notations:
\begin{align*}
\begin{split}
\y{F}{G}{n} \coloneqq  \{K/F  \mid [K:F]=n, \, \gal{\widetilde{K}/F} \cong G \}
\end{split}
\end{align*}
and
\begin{align*}
\begin{split}
\mathcal{F}_n(F, \, G, \, X ) \coloneqq \{K/F \mid [K:F]=n, \, \gal{\widetilde{K}/F} \cong G, \,|\n{K}{F}{\Q}| \leq X\}.
\end{split}
\end{align*}
 How does $\z{F}{G}{X}{n}$ grow as $X \to \infty$?

In this article, we will focus on the case where $G \cong C_2 \wr H$ for any transitive permutation group $H$. In 2012, J{\"u}rgen Kl{\"u}ners \cite{kluners} proved that the counting function $\z{F}{C_2 \wr H }{X}{n}$ is asymptotic to $c(F, C_2 \wr H)X$ for some positive constant $c(F, C_2 \wr H) >0$ assuming mild conditions on $H$. We restate the main result of \cite{kluners} here.

\begin{thm}[Klüners, 2012]\label{thm:kluners main theorem}
Let $H$ is a transitive subgroup of $S_n$ and fix an embedding $H \hookrightarrow S_n.$ Assume that there exists at least one extension $E$  of $F$ of degree $n$ such that $\gal{\widetilde{E}/F} \cong H$. Furthermore, assume that the following estimate holds
\begin{align*}
    \z{F}{H}{X}{n} = \O_{F,H,\epsilon}(X^{1+\epsilon}).
\end{align*}
Then,
\begin{align*}
    \z{F}{C_2 \wr H}{X}{2n} \sim c(F,C_2 \wr H)X
\end{align*}
where the constant $c(F,H)$ is given by 
\begin{align}\label{eq:leadconst}
   c(F, C_2 \wr H)=\frac{1}{\alpha(H)}  \sum_{K \in \y{F}{H}{n}} \frac{1}{2^{i(K)}}\cdot \frac{\mathrm{Res}_{s=1}\zeta_K(s)}{\zeta_K(2)}\cdot \frac{1}{|\n{K}{F}{\Q}|^2},
\end{align}
where $\zeta_K(s)$ denotes the Dedekind zeta function associated with $K$, $i(K)$ is the number of pairs of complex conjugate embeddings of $K$ in $\overline{\Q}$, and $\alpha(H)$ is a positive integer depending only on $H$.
\end{thm}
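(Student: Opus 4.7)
The plan is to set up a bijective parametrization of $C_2 \wr H$-extensions $L/F$ of degree $2n$ by pairs $(K, L/K)$, where $K \in \mathcal{F}_n(F, H)$ is a degree-$n$ subfield of $L$ and $L/K$ is a quadratic extension. The embedding $C_2 \wr H \hookrightarrow S_{2n}$ carries a canonical system of $n$ blocks of size $2$; the subgroup $C_2^n \rtimes H_1$ (where $H_1 \le H$ is a point stabilizer) sits between the point stabilizer in $S_{2n}$ and the full wreath product, and its fixed field is precisely the intermediate $K$. I would make this parametrization precise by a subgroup-lattice analysis, identifying which pairs $(K, L/K)$ yield a normal closure with Galois group \emph{exactly} $C_2 \wr H$ rather than a proper subgroup, and recording the resulting combinatorial overcount as $\alpha(H)$.

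With this in hand, the tower formula $\mathfrak{d}_{L/F} = \mathrm{Nm}_{K/F}(\mathfrak{d}_{L/K}) \cdot \mathfrak{d}_{K/F}^{\,2}$ converts the discriminant bound into the condition
\begin{align*}
|\mathrm{Nm}_{K/\mathbb{Q}}\mathfrak{d}_{L/K}| \;\le\; \frac{X}{|\mathrm{Nm}_{F/\mathbb{Q}}\mathfrak{d}_{K/F}|^{2}}.
\end{align*}
For each fixed $K$, the inner count of quadratic extensions of $K$ with relative discriminant norm bounded by $Y$ is classical: a Dirichlet-series argument, factoring through $\zeta_K(s)/\zeta_K(2s)$ and shifting the contour past $\mathrm{Re}(s)=1$, yields the main term
\begin{align*}
\frac{1}{2^{i(K)}} \cdot \frac{\mathrm{Res}_{s=1}\zeta_K(s)}{\zeta_K(2)} \cdot Y \;+\; O\!\left(Y^{1-\delta}\right),
\end{align*}
with $\delta>0$, where the factor $2^{i(K)}$ arises from controlling the archimedean square classes.

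Summing this main term over $K \in \mathcal{F}_n(F, H)$ and dividing by $\alpha(H)$ produces the claimed constant $c(F, C_2 \wr H)$ of \eqref{eq:leadconst}. Absolute convergence of the weighted sum
\begin{align*}
\sum_{K \in \mathcal{F}_n(F, H)} \frac{1}{2^{i(K)}} \cdot \frac{\mathrm{Res}_{s=1}\zeta_K(s)}{\zeta_K(2)} \cdot \frac{1}{|\mathrm{Nm}_{F/\mathbb{Q}}\mathfrak{d}_{K/F}|^{2}}
\end{align*}
is guaranteed by the hypothesis $\#\mathcal{F}_n(F, H, X) = O_{F,H,\epsilon}(X^{1+\epsilon})$: by partial summation, the Dirichlet series $\sum_K |\mathrm{Nm}_{F/\mathbb{Q}}\mathfrak{d}_{K/F}|^{-s}$ converges absolutely for $\mathrm{Re}(s) > 1$, and the residue and zeta factors entering the weight grow only polylogarithmically in $|\mathrm{disc}(K/F)|$.

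The principal obstacles are twofold. First, the exact value of $\alpha(H)$ requires a delicate exclusion of pairs $(K, L/K)$ whose normal closure has Galois group strictly contained in $C_2 \wr H$ (for instance, when $L$ already contains a quadratic extension of $F$); handling this via a M\"obius-type inclusion--exclusion over subgroups of $C_2 \wr H$ is the most technical combinatorial step. Second, upgrading the asymptotic to a power-saving error term in $X$ (as in the article's title) demands a uniform-in-$K$ power saving in the inner count together with a truncation argument: split the $K$-sum at some threshold $T(X)$, apply the inner main term plus error for $K$ with $|\mathrm{Nm}_{F/\mathbb{Q}}\mathfrak{d}_{K/F}| \le T(X)$, absorb the tail using the hypothesis on $\mathcal{F}_n(F, H, X)$, and optimize $T(X)$ in $X$.
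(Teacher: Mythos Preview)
Your proposal follows essentially the same architecture as the paper's argument (Section~3, culminating in the proof of Theorem~\ref{thm:main theorem}, which contains the asymptotic of Theorem~\ref{thm:kluners main theorem} as a special case): parametrize by towers $L/K/F$, apply the discriminant tower formula, count quadratic extensions of each $K$, and sum over $K$ using partial summation together with the hypothesis on $\#\mathcal{F}_n(F,H,X)$.

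Two points of comparison are worth noting. First, for the inner quadratic count the paper invokes the explicit McGown--Tucker result (Theorem~\ref{thm:explicitC2}), which already packages the error term with its dependence on $|\mathrm{Disc}(K)|$ and on $\#\mathrm{Cl}_K[2]$; your contour-shifting sketch through $\zeta_K(s)/\zeta_K(2s)$ gives the same main term but you would need to make the error uniform in $K$ (and track the class-group factor) to recover anything beyond the bare asymptotic. Second, you conflate two separate issues in your discussion of $\alpha(H)$: the integer $\alpha(H)$ is purely the multiplicity with which a \emph{genuine} $C_2\wr H$-extension $L$ is hit by pairs $(K,L/K)$, whereas the exclusion of pairs whose Galois closure realizes a \emph{proper} subgroup of $C_2\wr H$ is the term $E_{2n}(F,C_2\wr H,X)$. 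The paper does not carry out a M\"obius sieve for the latter; it simply quotes Kl\"uners's bound $E_{2n}\ll X^{3/4+\epsilon}$ from \cite{kluners} as a black box (equation~\eqref{eq:klunersbound}). Your proposed inclusion--exclusion would work in principle, but it is more labor than the paper expends, and it is not needed to determine $\alpha(H)$.
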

This result was subsequently strengthened by the work of Brandon Alberts, Robert J. Lemke Oliver, Jiuya Wang, and Melanie Matchett Wood \cite{inductivecounting}. In particular, Corollary 1.4 of \cite{inductivecounting} relaxes the hypothesis on $H$ to $\z{F}{G}{X}{n} \ll X^{\frac{3}{2}-\delta}$ for some $\delta >0.$

Once we know the asymptotic behavior of the counting function $\z{F}{G}{X}{n}$, a natural question arises: How rapidly does this counting function $\z{F}{G}{X}{n}$ approach the asymptotic? In other words, can we bound the difference between the actual count and the asymptotic? We refer to this difference as the error term. The method used by Klüners can do slightly better than the main term. Even though the quality of the error term is not mentioned explicitly, an application of a suitable Tauberian theorem would yield an error term of $\O(X^{\frac{5}{6}+\epsilon})$. In this article, we provide an improved error term for this counting function. 

Prior to this, in the case $H=C_2$(so that $G \cong C_2 \wr C_2 \cong D_4$),  Henri Cohen, Francisco Diaz Y Diaz, and Michel Olivier \cite{cohend4}  proved the asymptotic formula for the counting function $\z{F}{D_4}{X}{4}$ obtaining a power saving error term of $\O(X^{\frac{3}{4}+\epsilon}).$

\begin{thm}[Cohen, Diaz Y Diaz, Olivier; 2001]\label{thm:cohen D4 }
The number $\wt{\Q}{D_4}{X}{4}$ of quartic $D_4$ extensions of $\Q$ up to isomorphism is asymptotic to $c(\Q, D_4)X$, i.e.,
  $$\z{\Q}{D_4}{X}{4} = c( \Q, D_4) X+ \O(X^{\frac{3}{4}+\epsilon}),$$     
with the constant $c(\Q, D_4)$ given as follows:
$$c(\Q, D_4)= \frac{1}{2}\sum_{K \in \y{\Q}{C_2}{2}}  \frac{2^{-i(K)}}{\ds{K}^2} \cdot\frac{\mathrm{Res}_{s=1}\zeta_K(s)}{\zeta_K(2)},$$
where $\zeta_K(s)$ denotes the Dedekind zeta function associated with $K$, and $i(K)$ is the number of pairs of complex conjugate embeddings of $K$ in $\overline{\Q}$. 
\end{thm}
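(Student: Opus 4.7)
The plan is to exploit the group-theoretic identification $D_4 \cong C_2 \wr C_2$: every quartic $D_4$-extension $L/\mathbb{Q}$ contains a unique quadratic subfield $K$ (the fixed field of the normal subgroup $V_4 \subset D_4$), and $L$ is in turn a quadratic extension of $K$. The first step is to set up a bijection between $\mathcal{F}_4(\mathbb{Q},D_4)$ and the set of pairs $(K,M)$ where $K \in \mathcal{F}_2(\mathbb{Q},C_2)$, $M/K$ is a quadratic extension, and $M/\mathbb{Q}$ is not Galois (the latter condition removing the finitely many pairs that produce $V_4$ or $C_4$ extensions). The conductor--discriminant formula then gives
\[
|\mathrm{Disc}(L/\mathbb{Q})| \;=\; |\mathrm{Disc}(K/\mathbb{Q})|^{2}\cdot \bigl|\mathrm{Nm}_{K/\mathbb{Q}}(\mathfrak{d}_{M/K})\bigr|,
\]
so counting $D_4$-extensions with $|\mathrm{Disc}(L)|\le X$ reduces to counting, for each fixed $K$, quadratic extensions $M/K$ with $|\mathrm{Nm}_{K/\mathbb{Q}}(\mathfrak{d}_{M/K})| \le X/|\mathrm{Disc}(K)|^{2}$, and then summing over $K$.

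Next, for a fixed quadratic field $K$ I would enumerate quadratic extensions $M/K$ via class field theory: they correspond bijectively to nontrivial quadratic Hecke characters of $K$, and the norm of the relative discriminant equals the norm of the square of the conductor. Assembling these into a Dirichlet series
\[
Z_K(s) \;=\; \sum_{M/K\ \text{quadratic}} \frac{1}{|\mathrm{Nm}_{K/\mathbb{Q}}(\mathfrak{d}_{M/K})|^{s}}
\]
and computing its Euler product prime by prime shows that $Z_K(s)$ agrees, up to finitely many explicit local correction factors at primes above $2$, with $\zeta_K(s)/\zeta_K(2s)$. Consequently $Z_K(s)$ admits meromorphic continuation past $\mathrm{Re}(s)=1$ with a single simple pole at $s=1$ whose residue equals $2^{-i(K)-1}\,\mathrm{Res}_{s=1}\zeta_K(s)/\zeta_K(2)$, matching exactly the $K$-summand of $c(\mathbb{Q},D_4)$ in the theorem.

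In the third step I would assemble the global generating series
\[
\Phi(s) \;=\; \sum_{L\in\mathcal{F}_4(\mathbb{Q},D_4)} \frac{1}{|\mathrm{Disc}(L)|^{s}} \;=\; \sum_{K\in\mathcal{F}_2(\mathbb{Q},C_2)} \frac{Z_K(s)}{|\mathrm{Disc}(K)|^{2s}} \;+\; (\text{corrections for non-}D_4\text{ pairs}),
\]
locate its rightmost pole at $s=1$ (the outer sum converges absolutely for $\mathrm{Re}(s)>1/2$, so the pole of $\Phi$ is inherited from that of the $Z_K$'s), and read off the residue as $c(\mathbb{Q},D_4)$. The main term then follows from Perron's formula and a contour shift through $s=1$; the power-saving error term $O(X^{3/4+\epsilon})$ should come from pushing the contour to the line $\mathrm{Re}(s)=3/4+\epsilon$ and estimating the resulting integral.

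The main obstacle will be establishing the vertical-line bounds on $\Phi(s)$ at $\mathrm{Re}(s)=3/4+\epsilon$ with sufficient uniformity in $K$, so that the outer sum over quadratic fields still converges with an acceptable error. This requires a quantitative convexity-type estimate for each $Z_K(s)$, together with a truncation argument that balances the contributions from $|\mathrm{Disc}(K)|$ small (where $Z_K$ is large but the factor $|\mathrm{Disc}(K)|^{-2s}$ is close to $1$) against those from $|\mathrm{Disc}(K)|$ large (where the discriminant factor decays but the individual counts degrade). The exclusion of $V_4$ and $C_4$ extensions contributes only $O(\sqrt{X}\log X)$, which is absorbed by the claimed error.
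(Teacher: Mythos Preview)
The paper does not prove this theorem; it is quoted as the prior result of Cohen, Diaz Y Diaz, and Olivier and included only for historical context, so there is no in-paper proof to compare against.

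Your outline is essentially the original Cohen--Diaz Y Diaz--Olivier strategy: build the Dirichlet series $Z_K(s)$ over quadratic extensions of each quadratic $K$, identify it (up to finite Euler factors and the archimedean correction) with $\zeta_K(s)/\zeta_K(2s)$, assemble the global $\Phi(s)$, and recover the main term and the $O(X^{3/4+\epsilon})$ error by Perron's formula and a contour shift. One bookkeeping caution: the map from $D_4$-quartics \emph{up to isomorphism} to pairs $(K,M)$ is two-to-one, not a bijection, because a $D_4$-quartic has two $\mathbb{Q}$-conjugate copies inside its Galois closure, each a distinct quadratic extension of the same $K$. This multiplicity is the source of the $\tfrac12$ in $c(\mathbb{Q},D_4)$; your residue computation for $Z_K(s)$ should give $2^{-i(K)}\,\mathrm{Res}_{s=1}\zeta_K(s)/\zeta_K(2)$, not $2^{-i(K)-1}$, so make sure the factor of $\tfrac12$ is not being inserted twice.

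For comparison, the method the present paper develops for general $C_2\wr H$ is organized differently. Rather than forming $\Phi(s)$ and shifting a contour, it plugs in the explicit McGown--Tucker count of quadratic extensions of $K$ (whose error term is already uniform in $|\mathrm{Disc}(K)|$) and then controls the outer sum over $K$ by partial summation. The vertical-line estimate on $\Phi(s)$ that you flag as the main obstacle is thereby bypassed, at the cost of importing that uniform count as a black box.
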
 

However, in the quartic $D_4$ case, assuming the Generalized Riemann Hypothesis, one gets an error term of $\O(X^{\frac{1}{2}+\epsilon})$. To this end, in 2024 , Kevin J. McGown and Amanda Tucker \cite{kevinamanda} improved the error term in the count of quartic $D_4$ field extensions without assuming GRH. They proved the following theorem:

\begin{thm}[McGown, Tucker; 2024]\label{thm:D4 error term}
\begin{align*}
\z{\Q}{D_4}{X}{4}  = & c(\Q, D_4)X +\O(X^{\frac{5}{8}+\epsilon}).
\end{align*}
\end{thm}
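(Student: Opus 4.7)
The plan is to refine the class-field-theoretic approach behind the Cohen--Diaz y Diaz--Olivier theorem, replacing pointwise convexity bounds on Dedekind zeta functions with averaged estimates over the family of real quadratic characters; this is what I expect will push the error exponent from $\tfrac{3}{4}+\epsilon$ down to $\tfrac{5}{8}+\epsilon$. First I would use the fact that every $D_4$-quartic $L/\Q$ contains a unique quadratic subfield $K$ and that $L/K$ is quadratic with $L/\Q$ non-Galois. Combined with the tower relation $|\ds{L}|=\ds{K}^2\cdot N_{K/\Q}(\mathfrak{d}_{L/K})$, this yields
\[
\z{\Q}{D_4}{X}{4}
=\sum_{K\in \y{\Q}{C_2}{2}}\bigl(\mathcal{M}_K\bigl(X/\ds{K}^2\bigr)-\mathcal{G}_K\bigl(X/\ds{K}^2\bigr)\bigr),
\]
where $\mathcal{M}_K(Y)$ counts all quadratic extensions $L/K$ with $N_{K/\Q}(\mathfrak{d}_{L/K})\le Y$, and $\mathcal{G}_K(Y)$ subtracts off those $L$ that are Galois over $\Q$ (biquadratic or cyclic of degree four, contributing $\O(X^{1/2+\epsilon})$ in total by a direct parametrization).

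Next, by class field theory and the conductor-discriminant formula, the Dirichlet series of $\mathcal{M}_K$ factors, up to subtracting the trivial extension, as $\zeta_K(s)/\zeta_K(2s)\cdot P_K(s)$, where $P_K(s)$ collects local Euler factors at the primes above $2$ and at the archimedean places. I would apply Perron's formula and shift the contour across the simple pole at $s=1$ to extract the main term $C_K Y$ matching the coefficient in $c(\Q,D_4)$. What remains is to estimate the shifted vertical-line integral and the Perron truncation error uniformly in $K$, then sum over $K$.

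The critical analytic step is controlling, after dyadically decomposing $\ds{K}\asymp Q$, expressions of the form
\[
\sum_{K:\,\ds{K}\sim Q}\frac{1}{\ds{K}^{2\sigma}}\int_{-T}^{T}|L(\sigma+it,\chi_K)|\,dt,
\]
for $\tfrac12<\sigma<1$, where $\chi_K$ is the real primitive character cutting out $K$. Using only convexity in the $q$-aspect recovers the $\tfrac{3}{4}$ exponent of \cite{cohend4}; to reach $\tfrac{5}{8}$ I would invoke the quadratic large sieve of Heath-Brown together with the approximate functional equation for $L(s,\chi_K)$, giving mean-square bounds over the family $\{\chi_K\}$ that beat any pointwise subconvex bound, and then optimize $\sigma$ and $T$ against the weight $X/\ds{K}^2$ across the dyadic ranges of $Q$.

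The hard part is precisely this optimization: getting the averaged mean-value input sharp enough in the $q$-, $t$-, and $\sigma$-aspects simultaneously, and then verifying that the balance of parameters lands on $\tfrac{5}{8}$ rather than a weaker exponent. A secondary technical obstacle is ensuring that the local factors at $2$ and the signature of $K$ (both of which appear in $P_K(s)$ and depend on $K$) do not introduce growth in $\ds{K}$ that spoils the final exponent; this requires explicit computation of those factors and a uniform bound in $\ds{K}$.
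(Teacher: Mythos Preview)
The paper does not prove this theorem: it is quoted in the introduction as a result of McGown--Tucker \cite{kevinamanda}, with no proof given, so there is nothing in the paper to compare your proposal against directly.

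That said, from what the paper does import from \cite{kevinamanda}, namely Theorem~\ref{thm:explicitC2}, one can reconstruct the intended argument, and it is not the Perron/large-sieve route you sketch. Theorem~\ref{thm:explicitC2} gives, for a quadratic base $K$, a count of quadratic extensions $L/K$ with error $\#\rcg{K}{2}\cdot O(|\ds{K}|^{1/3}Y^{1/2+\epsilon})$; this has the shape of a lattice-point remainder rather than a contour-shift remainder. Running the paper's own Section~3 machinery with $F=\Q$, $H=C_2$, together with the genus-theory input $\#\rcg{K}{2}\ll|\ds{K}|^{\epsilon}$ valid for quadratic $K$, one splits the sum over $K$ at a threshold $Z$: the small-$K$ part contributes $\ll X^{1/2+\epsilon}Z^{1/3}$ via Theorem~\ref{thm:explicitC2}, the large-$K$ part contributes $\ll XZ^{-1+\epsilon}$ via the $2$-Selmer/ideal-count bound exactly as in the proof of Theorem~\ref{thm:main theorem}(ii), and balancing at $Z=X^{3/8}$ gives $X^{5/8+\epsilon}$. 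Your approach through $\zeta_K(s)/\zeta_K(2s)$, contour shifting, and Heath-Brown's quadratic large sieve is a genuinely different strategy; it is plausible in outline, but you have not actually executed the parameter optimization you yourself identify as the hard step, so the proposal remains a sketch, and in any case it is not the method reflected in this paper.
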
   

In this article, we extend the work of McGown and Tucker to obtain a better power-saving error term in counting $C_2 \wr H$ field extensions over an arbitrary base field $F.$ The following is our main theorem that gives power saving error term for the counting function $\z{F}{C_2 \wr H}{X}{2n}$:

\begin{thm}\label{thm:main theorem} 
Let $F$ be a number field of degree $m$. Let $H$ be a transitive subgroup of $S_n$ and fix an embedding $H \hookrightarrow S_n.$ Assume that there exists at least one extension $E$  of $F$ of degree $n$ such that  $\gal{\widetilde{E}/F} \cong H$. Furthermore, assume 
\begin{equation*}
    \z{F}{H}{X}{n} \ll X^a,
\end{equation*}
where $a \leq \frac{3}{2}$ is a constant. 
\begin{enumerate}[(i)]
    \item If $a \leq \frac{3}{2}-\frac{9mn-1}{2mn(mn+1)}$ then we have
\begin{equation*}
    \z{F}{C_2 \wr H}{X}{2n} =c(F, C_2 \wr H)X +\O(X^{1-\frac{2}{mn+1}+\epsilon}) +\O(X^{\frac{3}{4}+\epsilon}).
\end{equation*}
\item If $\frac{3}{2}-\frac{9mn-1}{2mn(mn+1)} < a \leq \frac{3}{2}$ then we have
\begin{align*}
    \z{F}{C_2 \wr H}{X}{2n} =c(F, C_2 \wr H)X +  \O(X^{\frac{2}{5}(1+a-\frac{1}{2mn})+\epsilon})+\O(X^{\frac{3}{4}+\epsilon}).
\end{align*}
\end{enumerate}
\end{thm}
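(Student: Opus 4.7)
The plan is to extend the McGown--Tucker approach (Theorem \ref{thm:D4 error term}) from $H = C_2$ to arbitrary transitive $H$. First, parametrize: up to the combinatorial constant $\alpha(H)$ and a genericity condition ensuring the Galois closure of $L/F$ does not collapse to a proper subgroup of $C_2 \wr H$, every $C_2 \wr H$ extension $L/F$ of degree $2n$ arises from a pair $(K, L/K)$ with $K \in \y{F}{H}{n}$ and $L/K$ a quadratic extension. The conductor-discriminant formula for the tower $F \subset K \subset L$ gives
\[
|N_{F/\Q}(\mathrm{disc}(L/F))| = d_K^{2} \cdot |N_{K/\Q}(\mathrm{disc}(L/K))|, \qquad d_K := |N_{F/\Q}(\mathrm{disc}(K/F))|,
\]
which reduces the counting function to
\[
\z{F}{C_2 \wr H}{X}{2n} = \frac{1}{\alpha(H)} \sum_{\substack{K \in \y{F}{H}{n} \\ d_K^{2} \leq X}} \mathcal{N}_K\!\left(\frac{X}{d_K^{2}}\right) + (\text{sieve corrections}),
\]
where $\mathcal{N}_K(Y)$ counts quadratic extensions of $K$ with $|N_{K/\Q}(\mathrm{disc}(L/K))| \leq Y$, and the sieve corrections remove non-generic pairs and contribute only lower-order terms.

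The core of the argument is an asymptotic for $\mathcal{N}_K(Y)$ with explicit uniformity in $K$. The Dirichlet series $Z_K(s) = \sum_{L/K \text{ quadratic}} |N_{K/\Q}(\mathrm{disc}(L/K))|^{-s}$ factors as $\zeta_K(s)/\zeta_K(2s)$ times a bounded finite Euler product over primes of $K$ above $2$ encoding the wild ramification data. Perron's formula, followed by a contour shift past the simple pole at $s=1$, extracts the main term $C_K Y$ with $C_K = \mathrm{Res}_{s=1}\zeta_K(s)/\zeta_K(2)$ up to local factors at $2$. The main obstacle is bounding the residual contour integral uniformly in $K$. For this I combine (i) the de la Vall\'ee Poussin zero-free region for $\zeta_K$, which makes $1/\zeta_K(2s)$ analytic and polynomially bounded in a strip just left of $\mathrm{Re}(s) = 1/2$, and (ii) the Phragm\'en--Lindel\"of convexity bound
\[
|\zeta_K(\sigma+it)| \ll \bigl(d_K(|t|+1)^{mn}\bigr)^{(1-\sigma)/2+\epsilon}, \qquad 0\leq \sigma \leq 1.
\]
Shifting the contour to $\mathrm{Re}(s) = \sigma_0 \in (\tfrac{1}{2}, 1)$ and truncating Perron at a suitable height yields an estimate of the form
\[
\mathcal{N}_K(Y) - C_K Y \ll Y^{\tau(\sigma_0)} d_K^{\beta(\sigma_0) + \epsilon}
\]
with explicit $\tau(\sigma_0), \beta(\sigma_0)$ that decrease as $\sigma_0 \to 1$. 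This is essentially a Landau-type error term for counting squarefree ideals of $K$ weighted by a character.

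Finally, aggregate over $K$. Partial summation with $\z{F}{H}{T}{n} \ll T^a$ (and $a \leq 3/2 < 2$) shows that the main sum $\sum_K C_K X/d_K^{2}$ converges absolutely to $c(F, C_2 \wr H) X$, with a tail $\O(X \cdot T^{a-2})$ from $d_K > T$. To bound the total error, split the sum at a threshold $d_K = T$: for $d_K \leq T$ apply the uniform analytic estimate above; for $d_K > T$ use the trivial bound $\mathcal{N}_K(Y) \ll Y^{1+\epsilon}$. Balancing the three contributions --- main-term tail, analytic error, trivial error --- jointly over $T$ and $\sigma_0$ yields the theorem. When $a$ is small, the optimization selects $\sigma_0 \approx \tfrac{1}{2} + \tfrac{1}{mn+1}$ (reflecting the Landau exponent $1 - \tfrac{2}{mn+1}$ for counting ideals in a degree-$mn$ field) and produces the exponent of case (i); when $a$ is larger, the trivial tail dominates and the optimum migrates, giving the exponent $\tfrac{2}{5}(1+a-\tfrac{1}{2mn})$ of case (ii). The universal $X^{3/4+\epsilon}$ floor arises from the limit $\sigma_0 \to \tfrac{1}{2}$ of the convexity bound and matches the Cohen--Diaz y Diaz--Olivier exponent in the $D_4$ case; removing it would require subconvexity input beyond this framework.
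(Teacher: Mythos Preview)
Your overall architecture---parametrize by towers $L/K/F$, obtain a uniform asymptotic for $\mathcal N_K(Y)$, sum over $K$ by partial summation, split at a threshold---is the same as the paper's. However, there is a genuine gap at the analytic core. The generating series $Z_K(s)$ is \emph{not} $\zeta_K(s)/\zeta_K(2s)$ times a uniformly bounded Euler factor at $2$. By the Cohen--Diaz y Diaz--Olivier parametrization, quadratic extensions of $K$ correspond to pairs $(\mathfrak a,\bar u)\in A(K)\times S_2(K)$, so $Z_K(s)$ is (up to factors at $2$ and $\infty$) a sum of $\asymp \#\mathrm{Cl}_K[2]$ Hecke $L$-functions, not a single one. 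Your contour-shift bound therefore inherits a factor $\#\mathrm{Cl}_K[2]$ that you have not accounted for; without it, the claimed uniform estimate $\mathcal N_K(Y)-C_KY\ll Y^{\tau}d_K^{\beta+\epsilon}$ fails. The paper handles precisely this by quoting the McGown--Tucker asymptotic (which makes the $\#\mathrm{Cl}_K[2]$ factor explicit) and then invoking the Bhargava--Shankar--Taniguchi--Thorne--Tsimerman--Zhao bound $\#\mathrm{Cl}_K[2]\ll|\mathrm{Disc}(K)|^{1/2-1/(2mn)+\epsilon}$; this $2$-torsion bound is the decisive external input and is absent from your sketch. For the same reason your ``trivial bound $\mathcal N_K(Y)\ll Y^{1+\epsilon}$'' for the tail is not uniform in $K$; the paper instead bounds the tail by $\#S_2(K)$ times an ideal count (via the Lowry-Duda--Taniguchi--Thorne lattice-point theorem), again reducing to the $2$-torsion bound.

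Separately, you misidentify the source of the $X^{3/4+\epsilon}$ floor. It does \emph{not} come from convexity at $\sigma_0\to 1/2$; it is the bound $E_{2n}(F,C_2\wr H,X)\ll X^{3/4+\epsilon}$ proved by Kl\"uners for the non-generic towers $L/K/F$ with $\mathrm{Gal}(\widetilde L/F)\subsetneq C_2\wr H$, together with the main-term tail $O(X^{a/2+\epsilon})\le O(X^{3/4+\epsilon})$ coming from $a\le 3/2$. Your proposal dismisses the ``sieve corrections'' as lower-order without bounding them; this is exactly where the $3/4$ enters, and it requires Kl\"uners's structural argument, not an analytic one.
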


\begin{rmk}
\begin{enumerate}[(i)]
\item Our result proves Conjecture \ref{conj1} with a weaker hypothesis on $H$ than Corollary 1.4, \cite{inductivecounting}. It only requires $\z{F}{H}{X}{n} \ll X^\frac{3}{2}$, instead of a power savings from this.
\item This result provides a better power-saving error term than Klüners's work \cite{kluners}. For a given base field $F$ and a given permutation group $H$, the best possible scenario is case (i) of this theorem, i.e, when the bound for $\wt{F}{H}{X}{n} \ll X^a$ is such that $ a \leq \frac{3}{2}-\frac{9mn-1}{2mn(mn+1)}$. If the upper bound for $\wt{F}{H}{X}{n}$ is worse, then we will be reduced to case (ii). As a result, it fails to provide a better power-saving error term than the one expected from \cite{kluners} for $mn \geq 11.$
\end{enumerate}
\end{rmk}

We provide a recipe to obtain even better power-saving error term for $H$ being certain small groups. We demonstrate this by considering the special case $H\cong S_3 $ viewed as a transitive subgroup of $S_6$ via the regular representation. We prove the following result:
\begin{thm}\label{thm:main theorem S3}
We have 
\begin{equation*}
    \z{\Q}{C_2 \wr S_3}{X}{12} = c(\Q, C_2 \wr S_3)X + \O(X^{\frac{5}{7}+\epsilon} )
\end{equation*}
where $c(F, C_2 \wr S_3)$ is given by equation \eqref{eq:leadconst}.
\end{thm}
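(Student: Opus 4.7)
The plan is to specialize the proof of Theorem~\ref{thm:main theorem} to $F = \Q$ and $H = S_3$ in its regular representation on $n = 6$ points, so that $m = 1$ and $mn = 6$, and then to eliminate the $\O(X^{3/4+\epsilon})$ summand coming from case~(i); the surviving term $\O(X^{1-2/(mn+1)+\epsilon})$ is exactly $\O(X^{5/7+\epsilon})$.

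First I would reduce to a sum over $S_3$-sextics via the usual parametrization: a degree-$12$ field $L/\Q$ with $\gal{\widetilde{L}/\Q} \cong C_2 \wr S_3$ corresponds to a pair $(K, M)$ with $K \in \y{\Q}{S_3}{6}$ a Galois sextic $S_3$-extension and $M/K$ a quadratic extension whose associated $L$ has the correct Galois group, satisfying $|d_L| = |d_K|^2 \cdot N_{K/\Q}(d_{M/K})$. This gives
\[
\z{\Q}{C_2 \wr S_3}{X}{12} \;=\; \sum_{K \in \y{\Q}{S_3}{6}} Q\!\left(K, \tfrac{X}{|d_K|^2}\right),
\]
where $Q(K, Y)$ is the weighted count of admissible $M/K$ with $N_{K/\Q}(d_{M/K}) \leq Y$. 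Any such $K$ is determined by its non-Galois cubic resolvent $L_0$ via $|d_K| = |d_{M_0}| \cdot |d_{L_0}|^2$ for the quadratic resolvent $M_0$, and the classical linear count of cubic fields gives $\wt{\Q}{S_3}{X}{6} \ll X^{1/2+\epsilon}$, which places us deeply inside case~(i) of Theorem~\ref{thm:main theorem} and yields $\O(X^{5/7+\epsilon}) + \O(X^{3/4+\epsilon})$ as a preliminary error.

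To remove the $\O(X^{3/4+\epsilon})$ term I would refine the estimate for $Q(K, Y)$. In the general proof, $Q(K, Y)$ is extracted by Perron from a Dirichlet series built out of $\zeta_K(s)$ and L-functions of quadratic ray class characters of $K$, and the bottleneck is the size of $\zeta_K(s)$ on a left-shifted contour, for which only the convexity bound is available in general. For $K$ an $S_3$-sextic, however, we have the Artin factorization
\[
\zeta_K(s) \;=\; \zeta(s) \cdot L(s, \chi) \cdot L(s, \rho)^2,
\]
where $\chi$ is the quadratic character cut out by $M_0$ and $\rho$ is the $2$-dimensional irreducible representation of $S_3$ (so $L(s, \rho) = \zeta_{L_0}(s)/\zeta(s)$). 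Both non-trivial factors are L-functions of degree at most $2$ over $\Q$ for which subconvex bounds are known (Burgess in the conductor aspect for $L(s, \chi)$; standard degree-$2$ subconvexity for $L(s, \rho)$). Inserting these into the contour shift replaces $Q(K, Y) = \alpha_K Y + \O(Y^{3/4+\epsilon})$ by $Q(K, Y) = \alpha_K Y + \O(Y^{\theta} |d_K|^{\theta'})$ with $\theta < 3/4$ and $\theta'$ small.

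Finally I would sum over $K$ and confirm the cancellation. The main term $\sum_K \alpha_K \cdot X/|d_K|^2$ recovers $c(\Q, C_2 \wr S_3) X$ exactly as in Theorem~\ref{thm:kluners main theorem}, and the refined error, summed via partial summation against $\wt{\Q}{S_3}{X}{6} \ll X^{1/2+\epsilon}$, is absorbed into the $\O(X^{5/7+\epsilon})$ already present from case~(i). The principal obstacle will be keeping the subconvexity gains large enough simultaneously in the $Y$-aspect and the $|d_K|$-aspect that the outer $K$-sum does not reintroduce a term of size $X^{3/4}$; this requires a careful joint optimization of the contour location with the $K$-summation range, analogous to the one McGown and Tucker carry out in the proof of Theorem~\ref{thm:D4 error term}, adapted here to the richer three-tier subfield structure of $S_3$-sextics.
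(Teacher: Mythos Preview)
Your plan misidentifies where the $\O(X^{3/4+\epsilon})$ in Theorem~\ref{thm:main theorem}(i) comes from. For a base field $K$ of degree $mn=6$, the McGown--Tucker count of quadratic extensions (Theorem~\ref{thm:explicitC2}) already has error exponent $1-\tfrac{2}{mn+1}=\tfrac{5}{7}$ in $Y$; that is precisely the origin of the $\O(X^{5/7+\epsilon})$ term, not of the $\O(X^{3/4+\epsilon})$ term. The $\O(X^{3/4+\epsilon})$ instead arises from two places in the general argument: the tail of the main-term sum (equation~\eqref{eqn:mainterm}, bounded by $X^{a/2+\epsilon}$ and then crudely by $X^{3/4+\epsilon}$ via $a\le 3/2$), and Kl\"uners's bound~\eqref{eq:klunersbound} on $E_{2n}(F,C_2\wr H,X)$, the count of towers $L/K/F$ whose Galois closure is a \emph{proper} subgroup of $C_2\wr H$. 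The first of these is harmless here since $a\le 1/2$ gives $O(X^{1/4+\epsilon})$. The second is the real obstruction, and your proposal does not address it: if your $Q(K,Y)$ counts only ``admissible'' $M/K$, then the Perron/$\zeta_K$ analysis you describe does not compute it; if $Q(K,Y)$ counts all quadratic $M/K$, then your displayed identity for $\z{\Q}{C_2 \wr S_3}{X}{12}$ is off exactly by $E_{12}(\Q,C_2\wr S_3,X)$, which you must bound separately.

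The paper removes the $\O(X^{3/4+\epsilon})$ by attacking $E_{12}$ directly: it shows that every relevant proper transitive subgroup $G\lneq C_2\wr S_3$ has $a(G\cap C_2^6)\ge 2$, and then applies Corollary~1.10 of \cite{inductivecounting} (together with Theorem~\ref{thm:bound for S3 fields} and Theorem~\ref{thm:class group bound}) to obtain $E_{12}(\Q,C_2\wr S_3,X)\ll X^{1/2}$. Your subconvexity idea for the factored $\zeta_K(s)$ is plausible in its own right, but it would act on the lattice-point error inside Theorem~\ref{thm:explicitC2} and hence push the $5/7$ exponent \emph{down}; it does nothing to control the degenerate-Galois-group count $E_{12}$, which is where the $3/4$ actually lives.
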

For $H \cong C_3 $ viewed as a subgroup of $S_3$ via the regular representation, we obtained an even better power-saving error term using better bounds for the size of the $2$-torsion subgroup of the class group. In this case, we prove the following theorem:
\begin{thm}\label{thm:main theorem C3}
    We have 

    \begin{equation*}
        \z{\Q}{C_2 \wr C_3}{X}{6}= c(\Q, C_2 \wr C_3)X + \O ( X^{\frac{2(1+ \delta  )}{5}+\epsilon}),
    \end{equation*}
where $\delta$ is such that $|\rcg{K}{2}| \ll |\ds{K}|^{\delta +\epsilon}$ for each $K \in \y{\Q}{C_3}{3}$. The value of $c(\Q, C_2 \wr C_3)$ is given as follows: 
$$c(\Q, C_2\wr C_3)=\sum_{K \in \y{\Q}{C_3}{3}}\frac{1}{2^{i(K)}}\cdot \frac{\mathrm{Res}_{s=1}\zeta_K(s)}{\zeta_K(2)}\cdot \frac{1}{|\ds{K}|^2}.$$ 

\end{thm}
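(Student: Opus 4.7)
The plan is to follow the blueprint of the proof of Theorem \ref{thm:main theorem}, specialized to $F=\Q$ and $H=C_3 \subset S_3$, but with the averaged bound on $|\rcg{K}{2}|$ that the general argument extracts from the $\z{\Q}{C_3}{X}{3}$ count replaced by the pointwise hypothesis $|\rcg{K}{2}| \ll |\ds{K}|^{\delta+\epsilon}$. First I parametrize a $C_2 \wr C_3$ extension $L/\Q$ by the pair $(K, L/K)$, where $K \in \y{\Q}{C_3}{3}$ is the unique cyclic cubic subfield of $L$ and $L/K$ is a quadratic extension chosen so that $\gal{\widetilde{L}/\Q} \cong C_2 \wr C_3$. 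The conductor–discriminant formula gives $|\ds{L}| = |\ds{K}|^2 \cdot |\mathrm{Nm}_{K/\Q}(\mathfrak{d}_{L/K})|$, so
\begin{equation*}
\z{\Q}{C_2 \wr C_3}{X}{6} = \sum_{K \in \y{\Q}{C_3}{3},\, |\ds{K}|^2 \leq X} N(K,\, X/|\ds{K}|^2),
\end{equation*}
where $N(K,Y)$ is a weighted count of admissible quadratic extensions $L/K$ with $|\mathrm{Nm}_{K/\Q}(\mathfrak{d}_{L/K})| \leq Y$.

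Next I analyze $N(K,Y)$ for fixed $K$. Writing this count as a sum over quadratic Hecke characters of $K$ and packaging it into a Dirichlet series whose local factors are controlled by $\zeta_K(s)$ and $\zeta_K(2s)$, I apply Perron's formula with a short contour shift. This yields a main term $c_K \cdot Y$, where $c_K$ is an explicit constant involving $\mathrm{Res}_{s=1}\zeta_K(s)/\zeta_K(2)$ and $i(K)$ that is compatible with \eqref{eq:leadconst}, together with an error $E(K,Y) \ll |\rcg{K}{2}|^{\alpha} \cdot Y^{\theta} \cdot |\ds{K}|^{\epsilon}$ for appropriate $\alpha$ and $\theta$. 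The $|\rcg{K}{2}|$ factor enters because quadratic characters of trivial conductor are enumerated by $2$-torsion in the narrow ray class group, and the remaining $K$-dependence is absorbed into $|\ds{K}|^{\epsilon}$ via standard convexity bounds for $\zeta_K(s)$ in the critical strip.

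I then globalize by a range decomposition, introducing a cutoff $Z \leq \sqrt{X}$. On $|\ds{K}| \leq Z$, I use the decomposition $N(K, Y) = c_K Y + E(K, Y)$, substitute $|\rcg{K}{2}| \ll |\ds{K}|^{\delta+\epsilon}$, and sum over $K$ by partial summation using the density $\z{\Q}{C_3}{T}{3} \sim cT^{1/2}$; this yields the main term $c(\Q, C_2 \wr C_3) X$ plus an error polynomial in $X$ and $Z$ with exponents depending on $\theta$ and $\delta$. On $Z < |\ds{K}| \leq \sqrt{X}$, I bound $N(K, X/|\ds{K}|^2)$ directly by its main-term majorant together with the trivial $2$-torsion contribution, summing to a tail error of size $\ll X \cdot Z^{-3/2+\epsilon}$. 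Optimizing $Z$ to balance the head error against the tail produces the claimed exponent $\frac{2(1+\delta)}{5}$.

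The main technical hurdle I anticipate is in the second step: producing the decomposition $N(K,Y) = c_K Y + E(K,Y)$ with $E$ factoring as $|\rcg{K}{2}|^\alpha$ times a suitable power of $Y$, so that the final optimization in $Z$ recovers the target exponent. This requires uniform (in $K$) control of the local Euler factors in the Dirichlet series enumerating quadratic extensions of $K$, especially at dyadic and wildly ramified primes, together with a Perron-type contour argument whose error estimates are uniform in $|\ds{K}|$. Once this inner bound is in place, the outer balancing is an elementary combination of the cubic-cyclic density and the pointwise $2$-torsion hypothesis.
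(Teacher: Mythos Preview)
Your overall architecture matches the paper's: parametrize by a cyclic cubic $K$ and a quadratic $L/K$, obtain an inner asymptotic $N(K,Y)=c_KY+E(K,Y)$ with explicit $K$-dependence, split the outer sum at a threshold $Z$, and optimize. Two concrete points, however, are not as you state them, and the claimed exponent cannot be recovered until they are corrected.

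\textbf{The $K$-dependence of $E(K,Y)$ is polynomial, not $|\ds{K}|^{\epsilon}$.} You assert that ``the remaining $K$-dependence is absorbed into $|\ds{K}|^{\epsilon}$ via standard convexity bounds for $\zeta_K(s)$.'' For a cubic $K$, convexity at $\re{s}=\tfrac12$ contributes $|\ds{K}|^{1/4+\epsilon}$, not $|\ds{K}|^{\epsilon}$; this is precisely the $|\ds{K}|^{1/4}$ appearing in the $m=3$ case of Theorem~\ref{thm:explicitC2}, which the paper quotes as a black box rather than rederiving via Perron. If your form $E(K,Y)\ll|\rcg{K}{2}|\,Y^{1/2}\,|\ds{K}|^{\epsilon}$ were true, then summing over all $K$ with $|\ds{K}|\le X^{1/2}$ would already converge and give $\O(X^{1/2+\epsilon})$ with no splitting needed, contradicting the exponent $\tfrac{2(1+\delta)}{5}$ you are aiming for. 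The genuine head contribution is $X^{1/2+\epsilon}\sum_{|\ds{K}|\le Z}|\ds{K}|^{-3/4+\delta+\epsilon}\ll X^{1/2+\epsilon}Z^{-1/4+\delta+\epsilon}$, which diverges as $Z$ grows (since $\delta>1/4$ in the known range) and is what forces the cutoff.

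\textbf{The tail cannot be handled by the ``main-term majorant.''} In the range $|\ds{K}|>Z$ the McGown--Tucker error dominates the main term, so $N(K,Y)\ll c_KY$ is false there, and your stated tail $X Z^{-3/2+\epsilon}$ is unjustified. The paper bypasses this by abandoning the asymptotic for $N(K,Y)$ altogether on the tail and using instead the Cohen--Diaz~y~Diaz--Olivier bijection between quadratic extensions of $K$ and pairs $(\mathfrak{a},\overline{u})\in A(K)\times S_2(K)$. Combined with $\#S_2(K)\ll\#\rcg{K}{2}\ll|\ds{K}|^{\delta+\epsilon}$ and a uniform ideal count (Theorem~\ref{thm:bound ideals with fixed norm}), this yields the direct upper bound $N(K,Y)\ll|\ds{K}|^{\delta+\epsilon}Y$, hence a tail of $X Z^{-3/2+\delta+\epsilon}$. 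Balancing this against the corrected head gives $Z=X^{2/5}$ and the exponent $\tfrac{2(1+\delta)}{5}$.

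Finally, by restricting from the outset to $L/K$ with $\gal{\widetilde{L}/\Q}\cong C_2\wr C_3$, you have hidden the overcount $E_6(\Q,C_2\wr C_3,X)$ coming from $\gal{\widetilde{L}/\Q}\cong C_6$ or $A_4$. The generic Kl\"uners bound $E_6\ll X^{3/4+\epsilon}$ would swamp your target; the paper separately shows $E_6\ll X^{1/2+\epsilon}$, and you will need the same.
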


\begin{rmk}
Using the value of $\delta=0.2784\dots$ given in \cite{bhargava2017bounds2torsionclassgroups}, we get the error term in Theorem \ref{thm:main theorem C3} for the group $C_2 \wr C_3$ is  $\O(X^{0.51136\dots+\epsilon})$, which is much better than the expected error term of $\O(X^{\frac{5}{6}+\epsilon})$ from \cite{kluners}, and it is also much better than the error term $\O(X^{\frac{3}{4}+\epsilon})$ provided by Cohen, Diaz Y Diaz, and Olivier in \cite{cohend4}
for the quartic $D_4(\cong C_2 \wr C_2)$ case. Additionally, if one assumes the $2$-torsion Conjecture, i.e, $\#\rcg{K}{2} \ll |\ds{K}|^{\epsilon}$ for $\epsilon$ arbitrarily small, it would yield an improved error term of $\O(X^{\frac{1}{2}+\epsilon})$.
\end{rmk}

\subsection{Brief historical review: Malle's conjecture}
In \cite{Malle1}, \cite{Malle2}, Gunter Malle made a general prediction for the asymptotic growth of the counting function $\z{F}{G}{X}{n}$, which leads to the following conjecture

\begin{conj}\label{conj1}
Let $F$ be a number field and $G$ is a transitive subgroup of $S_n.$ Then, there exist positive constants  $a, \, b, \,c$ depending on $F$, and $G$ such that 
\begin{equation*}
    \z{F}{G}{X}{n} \sim c X^{a} (\log(X))^{b}.
\end{equation*}

\end{conj}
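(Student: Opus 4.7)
The final statement is Malle's conjecture in its weak form, which is a well-known open problem; the paper itself only establishes it for $C_2 \wr H$ via Theorem \ref{thm:main theorem}. Accordingly, rather than sketch a nonexistent proof, the plan below outlines the inductive strategy that underlies all current progress on the conjecture and identifies where it breaks down in general.

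The plan is inductive on the group order. Given a transitive $G \leq S_n$, first look for a nontrivial abelian normal subgroup $N \triangleleft G$ with quotient $H = G/N$, and parametrize every $G$-extension $L/F$ uniquely by the data of an $H$-extension $K/F$ (the fixed field of $N$) together with an $N$-extension $L/K$ whose Galois closure over $F$ realizes $G$. Second, since $N$ is abelian, class field theory over $K$ identifies $N$-extensions of $K$ with characters on ray class groups of $K$, and the $F$-discriminant of $L$ is recovered from the $K$-conductor of $L/K$ and the $F$-discriminant of $K$ via the conductor-discriminant formula. Third, summing over $K \in \mathcal{F}_n(F, H)$ and over admissible characters produces a double Dirichlet series whose main term and error term are extracted analytically — by a Tauberian theorem in Kl\"uners's original work, or by contour-shift methods when a power-saving error term is desired, as in the present paper. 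This scheme succeeds provided one has in hand (a) a prior upper bound $\mathcal{F}_n(F, H, X) \ll X^a$ for the counting of $H$-extensions with $a$ small enough that the induction closes, and (b) adequate analytic control of the Hecke $L$-functions over $K$ that appear in the inner sum.

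The main obstacle is that essentially neither input is available for general $G$. A simple or quasi-simple $G$ admits no nontrivial abelian normal subgroup, so the parametrization step fails outright; groups such as $A_n$ for $n \geq 5$ have no known Dirichlet-series-amenable parametrization of their field extensions, and this is the source of the deepest open cases. Even when $G$ does have an abelian normal subgroup, the required bound $\mathcal{F}_n(F, H, X) \ll X^a$ for the quotient $H$ is itself a version of Malle's conjecture, so the induction bottoms out in cases that are no easier than the starting problem. A uniform proof of Conjecture \ref{conj1} would therefore need either a new parametrization that bypasses the abelian-normal-subgroup hypothesis, or a general technique for extracting the predicted exponent $a$ and log-power $b$ directly from the action of $G$ on $\{1, \dots, n\}$ without first enumerating fields; producing either is the fundamental hard part, and it is why the conjecture has remained open despite substantial progress on individual families such as the $C_2 \wr H$ case treated here.
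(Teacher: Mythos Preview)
Your identification is correct: the statement is labeled \texttt{conj} in the paper and is presented as Malle's conjecture, not as a result the paper proves. There is no proof in the paper to compare against; the paper only verifies the conjecture in the special case $G = C_2 \wr H$ via Theorem~\ref{thm:main theorem}, exactly as you note. Your outline of the inductive strategy and its obstructions is accurate and informative, though it goes well beyond anything the paper itself says about Conjecture~\ref{conj1} --- the paper simply states the conjecture and moves on.
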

We introduce the following definition
\begin{defn}
  Let $G \leq S_n$ be a transitive subgroup acting on the set $\Omega =\{1,2, \dots, n\}.$
  \begin{enumerate}
      \item For $g \in G$ we define the index of $g$ to be $$\ind{g}\coloneqq n-\text{number of orbits of }g \text{ in } \Omega.$$ 
      \item $\ind{G} \coloneqq \min\{\ind{g}:1\neq g \in G\}.$
      \item $a(G) \coloneqq \ind{G}^{-1}.$
  \end{enumerate}
\end{defn}

The constant $a(G)$ is known as Malle's constant. Malle's predicted value of the constant $a$ in Conjecture \ref{conj1} is this $a(G).$  Malle also proposed a value of the constant $b$. However, Jürgen Klüners \cite{kluners2005counter} produced a counterexample demonstrating that the proposed value of $b$ cannot hold in general. We also remark that Conjecture \ref{conj1} was known in several cases prior to Malle's prediction, most notably, David J. Wright \cite{wrightdensity} established Conjecture \ref{conj1} for all abelian groups. For a comprehensive historical  review of the number field counting problem, we refer the reader to  \cite{alinad4}. 

\section{Notations}
This section lists notations and conventions that we will use throughout this article. 

For any group $A$ and a transitive permutation group $B$ with a fixed embedding $B \hookrightarrow S_m$, we denote $A \wr B$ to be the wreath product of $A$ with $B.$

Let $E$ be a number field. $\ds{E}$ denotes the absolute discriminant of the number field $E$.  

We use $\mathrm{Cl}_E$ to denote the class group of $E$, and $\rcg{E}{m}$ denotes the $m$-torsion subgroup of the class group.

We denote by $r(E)$ the number of real embeddings of $E$ and $i(E)$ is the number of pairs of complex conjugate embeddings of $E$ in $\overline{\Q}$.

For an extension of number fields $E/F$, $\D{E}{F}$ denotes the relative discriminant ideal, and $\widetilde{E}$ denotes the Galois closure of $E/F$. 

We write $f(X) \sim g(X)$ if they satisfy the relation $\lim_{X \to \infty} \frac{f(X)}{g(X)}=1.$ Furthermore, we write $f(X) \ll g(X)$ or $f(X) = \O(g(X) )$ if they satisfy the inequality $|f(X)|  \leq C g(X) $ when $X \geq T$, for some constants $C$ and $T.$

\section{Counting \texorpdfstring{$C_2 \wr H$}{C2 wr H}-extensions of a number field}

\par Suppose the degree of the base field $F$ is given by $$m=[F: \Q] .$$ Throughout this section, we will assume that there exists at least one extension of $F$ of degree $n$ such that the Galois group of the Galois closure is isomorphic to $H$ and
\begin{equation*}
    \z{F}{H}{X}{n} \ll X^a,
\end{equation*}
where $a$ is a constant such that $a \leq \frac{3}{2}$.

If $L/F$ is an extension with Galois group $C_2 \wr H$ then there exists a subfield $K \leq L$ containing $F$ such that $\gal{L/K}=C_2$ and $\gal{K / F}=H$. Thus, we have
\begin{align}\label{thm:count quadratic over H extension}
\sum_{K \in \mathcal{F}_n(F, \, H, \, X^{1/2})} \sum_{L \in \wt{K}{C_2}{X/|\n{K}{F}{\Q}|^2}{2}} 1 =  &\alpha(H) \z{F}{C_2 \wr H}{X}{2n}  +E_{2n}(F, C_2 \wr H, X),
\end{align}
where $\alpha(H)$ denotes the multiplicity of $C_2 \wr H$-extensions of $F$ appearing in this sum and $E_{2n}(F, \, C_2 \wr H, \, X)$ is defined as follows:

\begin{align*}
E_{2n}(F, \, C_2 \wr H, \, X) \coloneqq \# \{L/K/F: [L:K]=2, & \, \gal{K/F} \cong H,\\
&\gal{\widetilde{L}/F} \neq C_2 \wr H,|{\n{L}{F}{\Q}}|\leq X \}.
\end{align*}
J{\"u}rgen Kl{\"u}ners proved(Theorem 5.2, \cite{kluners}) that 
\begin{equation}\label{eq:klunersbound}
  E_{2n}(F, \, C_2 \wr H, \, X) \ll X^{\frac{3}{4}+\epsilon}.
\end{equation}
Thus, to obtain the main asymptotic for $\z{F}{C_2 \wr H}{X}{2n}$ it suffices to analyze the left-hand side of equation \eqref{thm:count quadratic over H extension}. 

We will use Theorem 2 of \cite{kevinamanda}, which we restate here for our purpose.

\begin{thm}[McGown, Tucker; 2024]\label{thm:explicitC2}
  Let $F$ be a number field of degree $m \geq 2.$ Then, we have
\begin{align*}
\begin{split}
\sum_{K \in \wt{F}{C_2}{X}{2}} 1 =& \frac{1}{2^{i(F)}} \frac{\mathrm{Res}_{s=1}\zeta_F(s)}{\zeta_F(2)}X + \\
& \rcg{F}{2}\cdot
\begin{cases}
\O(|\ds{F}| ^{\frac{1}{3}} \log{(|\ds{F}|)X^{\frac{1}{2}} \log X} ) & \text{if } m=2 \\
\O( |\ds{F}|^{\frac{1}{4}}(\log  |\ds{F} )^2  X^{\frac{1}{2}}(\log X )^3) & \text{if } m=3 \\
\O_m( |\ds{F}^{\frac{1}{m+1}}X^{1-\frac{2}{m+1}}(\log X)^{m-1}) & \text{if } m>3 \\ 
\end{cases}
\end{split}    
\end{align*}
\end{thm}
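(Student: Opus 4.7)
The plan is to express the counting function as the coefficient sum of an explicit Dirichlet series $\Phi_F(s)$, apply a truncated Perron formula, and shift the contour of integration past the simple pole at $s=1$, controlling the resulting vertical integral by convexity or subconvexity bounds for the Dedekind zeta function $\zeta_F$.

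First I would parametrize the quadratic extensions $K/F$ using class field theory: each such $K$ corresponds to a continuous order-$2$ character of the idele class group of $F$, with the relative discriminant equal to the conductor of the associated character (appropriately normalized at the archimedean places). Applying the conductor-discriminant formula and collecting local data at the places above $2$ and at infinity produces an identity of the form
\[
\Phi_F(s) \ := \ \sum_{K \in \y{F}{C_2}{2}} |\n{K}{F}{\Q}|^{-s} \ = \ \frac{\zeta_F(s)}{\zeta_F(2s)} \cdot Q_F(s) \ - \ R_F(s),
\]
where $Q_F(s)$ is a finite Euler product accounting for wild ramification and archimedean corrections and $R_F(s)$ absorbs the trivial character. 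In particular $\Phi_F$ is meromorphic on $\re{s} > 0$, with a simple pole at $s=1$ whose residue equals $2^{-i(F)}\,\mathrm{Res}_{s=1}\zeta_F(s)\,/\,\zeta_F(2)$, matching the claimed leading constant.

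Second I would apply the truncated Perron formula at $\re{s} = 1 + (\log X)^{-1}$ and shift the contour to a vertical line $\re{s} = \sigma_0 < 1$. The residue at $s=1$ produces the main term. The truncation error contributes a term of order $|\rcg{F}{2}|\,X^{1+\epsilon} T^{-1}$, with the $|\rcg{F}{2}|$ factor arising from Hecke's ambiguous class number formula, which bounds the number of quadratic extensions sharing a fixed relative discriminant ideal. The remaining vertical integral along $\re{s} = \sigma_0$ is estimated via convexity bounds
\[
|\zeta_F(\sigma+it)| \ \ll_\epsilon \ \bigl(|\ds{F}|\,(1+|t|)^m\bigr)^{(1-\sigma)/2+\epsilon},
\]
together with the elementary estimate $|\zeta_F(2s)|^{-1} \ll 1$ on the relevant line. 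The three regimes $m=2$, $m=3$, $m>3$ then correspond to distinct optimizations of $\sigma_0$ and the height $T$. For $m > 3$ the raw convexity bound with the balance $T \approx X^{2/(m+1)}$ yields the stated $|\ds{F}|^{1/(m+1)} X^{1 - 2/(m+1)}$ error, the factor $(\log X)^{m-1}$ arising from repeated use of the zero-free region. For $m = 2, 3$ the sharper dependence on $|\ds{F}|$ requires $t$-aspect subconvex bounds for $\zeta_F$ on the critical line; in the quadratic case these follow from the factorization $\zeta_F = \zeta \cdot L(s,\chi)$ together with Weyl or Burgess bounds for the quadratic $L$-function.

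The main obstacle is obtaining error bounds with an \emph{explicit uniform} dependence on $|\ds{F}|$ rather than an asymptotic bound in $X$ for fixed $F$. This forces one to track the conductor aspect in every analytic ingredient: the effective zero-free region of $\zeta_F$, the explicit functional equation, and the approximate functional equation underlying any subconvex bound invoked. A secondary technical point is pinning down the precise shape of $Q_F(s)$ at the wildly ramified primes above $2$, since any inaccuracy there directly corrupts the leading constant of the main term and therefore the subsequent application to $C_2 \wr H$ extensions.
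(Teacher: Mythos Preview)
The paper does not prove this statement at all; it is quoted verbatim as Theorem~2 of McGown--Tucker \cite{kevinamanda} and used as a black box. So there is no ``paper's own proof'' to match, and the relevant question is whether your sketch recovers the stated error terms.

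It does not, and the gap is quantitative. Running Perron with the convexity bound $|\zeta_F(\sigma+it)|\ll(|\ds{F}|(1+|t|)^m)^{(1-\sigma)/2+\epsilon}$ and balancing the truncation error $X/T$ against the vertical integral on $\re{s}=\sigma_0$ gives, after optimizing in $\sigma_0$, an error of order $|\ds{F}|^{1/(m+4)}X^{1-2/(m+4)}$ if one stays to the right of $\re{s}=\tfrac12$ (needed to control $\zeta_F(2s)^{-1}$), or at best $|\ds{F}|^{1/(m+2)}X^{1-2/(m+2)}$ if one pushes further left. Your asserted balance $T\approx X^{2/(m+1)}$ does not fall out of this computation, and $t$-aspect subconvexity shifts the $X$-exponent but does not turn the denominator $m+2$ (or $m+4$) into $m+1$.

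The exponents $\tfrac{1}{m+1}$ and $1-\tfrac{2}{m+1}$ in the statement are exactly those of the lattice-point ideal count of Lowry-Duda--Taniguchi--Thorne (restated in this paper as Theorem~\ref{thm:bound ideals with fixed norm}), and the prefactor $\rcg{F}{2}$ matches the Selmer parametrization $A(F)\times S_2(F)\to\{L/F:[L:F]=2\}$ that the paper also recalls. The McGown--Tucker argument proceeds by counting squarefree ideals in each Selmer coset via that geometric bound, not by contour shifting; this is where the sharper $m+1$ comes from. Your Dirichlet-series setup and residue computation are fine and would reproduce the main term, but to reach the stated error you would need to replace the convexity step by the lattice-point input.
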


Applying Theorem \ref{thm:explicitC2} on the left-hand side of equation $\eqref{thm:count quadratic over H extension}$, we get

\begin{align}\label{eq:separate main and error term}
& \sum_{K \in \mathcal{F}_n(F, \, H, \, X^{1/2})} \sum_{L \in \wt{K}{C_2}{X/|\n{K}{F}{\Q}|^2}{2}}  1 \nonumber \\
& = \sum_{K \in \mathcal{F}_n(F, \, H, \, X^{1/2})} \frac{1}{2^{i(K)}}\cdot \frac{\mathrm{Res}_{s=1}\zeta_K(s)}{\zeta_K(2)}\cdot \frac{X}{|\n{K}{F}{\Q}|^2} + \nonumber \\
&\sum_{K \in \mathcal{F}_n(F, \, H, \, X^{1/2})} \#\rcg{K}{2}\cdot  \O\left( |\ds{K}|^{\frac{1}{mn+1}}\cdot\left(\frac{X}{|\n{K}{F}{\Q}|^2}\right)^{1-\frac{2}{mn+1}+\epsilon}\right).
\end{align}
We analyze the main term and the error term in \eqref{eq:separate main and error term} separately. 

\subsection{Sum of main terms}
\par In this section, we study the main term in equation \eqref{eq:separate main and error term}, i.e, we analyze the sum

\begin{equation*}
   \sum_{K \in \mathcal{F}_n(F, \, H, \, X^{1/2})} \frac{1}{2^{i(K)}}\cdot \frac{\mathrm{Res}_{s=1}\zeta_K(s)}{\zeta_K(2)}\cdot \frac{X}{|\n{K}{F}{\Q}|^2}. 
\end{equation*}
We will require the following lemma:

\begin{lem}\label{lemma: main lemma}
Suppose $f$ and  $g $ are mappings from $\R \to \R$ such that $g$ is continuously differentiable with $g^{\prime}(t) \leq 0$ for all $t \geq 1$ and
\begin{align*}
& \# \{K/F \mid [K:F] =k, \gal{K/F} \cong H , |\n{K}{F}{\Q}| \leq X  \}  \ll f(X). 
\end{align*}
   Then,
\begin{align*}
&\sum_{K \in \wt{F}{H}{Z}{k} \setminus \wt{F}{H}{W}{k}} g(|\n{K}{F}{\Q}|)   \ll f(Z)|g(Z)|+ f(W)|g(W)|-\int_{W}^{Z} f(t) g^{\prime} (t) dt .
\end{align*}
In particular, if $k >1$ and $W=1$ then 
\begin{equation*}
 \sum_{K \in \wt{F}{H}{Z}{k}} g(|\n{K}{F}{\Q}|) \ll  f(Z)|g(Z)|-\int_{1}^{Z} f(t) g^{\prime} (t) dt .
\end{equation*}
\end{lem}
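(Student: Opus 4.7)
The plan is to prove the lemma by standard Abel (partial) summation applied to the counting function for $H$-extensions of $F$ ordered by the absolute norm of the relative discriminant, combined with integration by parts.

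First I would introduce the counting function $A(t) \coloneqq \#\wt{F}{H}{t}{k}$, which is a non-decreasing step function of $t$ satisfying $A(t) \ll f(t)$ for $t \geq 1$ by hypothesis. The sum in question can then be rewritten as the Riemann--Stieltjes integral
\[
\sum_{K \in \wt{F}{H}{Z}{k} \setminus \wt{F}{H}{W}{k}} g(|\n{K}{F}{\Q}|) \;=\; \int_W^Z g(t)\, dA(t),
\]
where $dA$ is the sum of point masses located at the discriminant norms $|\n{K}{F}{\Q}|$ as $K$ ranges over the extensions being counted.

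Next, I would integrate by parts to obtain
\[
\int_W^Z g(t)\, dA(t) \;=\; g(Z)\, A(Z) \;-\; g(W)\, A(W) \;-\; \int_W^Z A(t)\, g'(t)\, dt.
\]
Taking absolute values and applying $A(t) \ll f(t)$ both at the endpoints and pointwise inside the integrand, while using the hypothesis $g'(t) \leq 0$ (so that $-A(t)\,g'(t)$ is pointwise nonnegative and bounded above by $-f(t)\,g'(t)$), gives
\[
\left| \sum_{K \in \wt{F}{H}{Z}{k} \setminus \wt{F}{H}{W}{k}} g(|\n{K}{F}{\Q}|) \right| \ll f(Z)|g(Z)| + f(W)|g(W)| - \int_W^Z f(t)\, g'(t)\, dt,
\]
which is exactly the claimed inequality.

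For the ``in particular'' case $W=1$, $k>1$, the same calculation applies, and I would only need the additional observation that $A(1) = O(1)$: for a fixed base field $F$ and fixed degree $k > 1$, Hermite's theorem ensures there are only finitely many extensions $K/F$ with $|\n{K}{F}{\Q}| \leq 1$, and this bounded contribution can be absorbed into the $f(Z)|g(Z)|$ term (equivalently, one can extend the lower limit of integration to $t \to 0^+$, where $A(t) = 0$, killing the boundary term at $W$). I do not anticipate any genuine obstacle here: this is a clean packaging of Abel summation tailored to be plugged directly into the sum of main terms in \eqref{eq:separate main and error term}, where $g$ will turn out to be a negative power of $t$, hence decreasing with $g'(t) < 0$.
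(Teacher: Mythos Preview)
Your proof is correct and takes essentially the same approach as the paper: both rewrite the sum via Abel (partial) summation, yielding the identity $A(Z)g(Z)-A(W)g(W)-\int_W^Z A(t)g'(t)\,dt$ with $A(t)=\#\wt{F}{H}{t}{k}$, and then replace $A$ by $f$ using $g'\le 0$. The only difference is in the ``in particular'' clause: the paper argues that for $k>1$ the relative discriminant norm exceeds $1$, so $A(1)=0$ and the boundary term vanishes exactly, whereas you invoke Hermite's theorem to get $A(1)=O(1)$ and absorb it; the paper's version is a touch cleaner (no absorption needed), though your route is perfectly adequate for every application in the paper.
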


\begin{proof}
    We have 
    \begin{align*}
       & \sum_{K \in \wt{F}{H}{Z}{k} \setminus \wt{F}{H}{W}{k}} g(|\n{K}{F}{\Q}|) \\
       &= \sum_{W \leq l \leq Z}  \# \{K/F: [K:F]=k, \, \gal{K/F} \cong H , \, |\n{K}{F}{\Q}| = l \}\cdot g(l)\\ 
       & = \z{F}{H}{Z}{k} g(Z)-\z{F}{H}{W}{k} g(W) + \int_{W}^{Z} (-g^{\prime}(t))\cdot \#\wt{F}{H}{t}{k} dt  \\ 
       & \ll f(Z)|g(Z)|+ f(W)|g(W)|-\int_{W}^{Z} f(t) g^{\prime} (t) dt \,\,\,\,\,\,\,\, \text{[Since } g^{\prime}(t) \leq 0]
    \end{align*}

Note that for any field extension $K/F$ with degree greater than one, the absolute norm of the relative discriminant of $K/F$ cannot be equal to $1$. Thus, for $W=1$ and $k >1$, we get $\z{F}{H}{W}{k}=0$. This completes the proof of the second part of the lemma.. 

\end{proof}

Coming back to the analysis of the main term in equation \eqref{eq:separate main and error term}, observe that,  since every term in the Euler product of $\zeta_K(2)$ is greater than $1$, we have $\zeta_K(2) \geq 1$. Furthermore, from Lemma 2.1 of \cite{kluners}, we get $$\mathrm{Res}_{s=1}\zeta_K(s) \ll_n |\ds{K}|^{\epsilon}.$$ We also have the following norm-discriminant relation:
\begin{equation*}
    \ds{K}=|\n{K}{F}{\Q}|\cdot\ds{F}^n.
\end{equation*}
Therefore, $\ds{K}$ and $|\n{K}{F}{\Q}|$ differ only by a constant. Using these results and applying Lemma \ref{lemma: main lemma}, we get 

\begin{align*}
&\sum_{K \in \y{F}{H}{n} \setminus \mathcal{F}_n(F,H,X^{1/2})}\frac{1}{2^{i(K)}}\cdot \frac{\mathrm{Res}_{s=1}\zeta_K(s)}{\zeta_K(2)}\cdot \frac{X}{|\n{K}{F}{\Q}|^2}  \nonumber  \\
& \ll_{\epsilon, n} X \sum_{K \in \y{F}{H}{n} \setminus \mathcal{F}_n(F,H,X^{1/2})} \frac{|\ds{K}|^{\epsilon}}{|\n{K}{F}{\Q}|^2} \nonumber \\ 
& \ll_{\epsilon,n,F} X \sum_{K \in \y{F}{H}{n} \setminus \mathcal{F}_n(F,H,X^{1/2})} |\n{K}{F}{\Q}|^{-2+\epsilon} \nonumber \\
& \ll_{\epsilon,n,F} X( X^{\frac{a}{2}}X^{-1+\epsilon}  - (-2+\epsilon) \int_{X^{\frac{1}{2}}}^{\infty}t^a t^{-3+\epsilon} dt ) \nonumber \\
& \ll_{\epsilon,n,F} X^{\frac{a}{2}+ \epsilon}.
\end{align*}

Therefore, we get 
\begin{align}\label{eqn:mainterm}
    &  \sum_{K \in \mathcal{F}_n(F, \, H, \, X^{1/2})} \frac{1}{2^{i(K)}}\cdot \frac{\mathrm{Res}_{s=1}\zeta_K(s)}{\zeta_K(2)}\cdot \frac{X}{|\n{K}{F}{\Q}|^2} \nonumber \\
    = & \left( \sum_{K \in \mathcal{F}_n(F, \, H)} \frac{1}{2^{i(K)}}\cdot \frac{\mathrm{Res}_{s=1}\zeta_K(s)}{\zeta_K(2)}\cdot \frac{1}{|\n{K}{F}{\Q}|^2}\right) X +\O(X^{\frac{a}{2}+\epsilon})\nonumber \\
    =&c(F, C_2 \wr H)X+\O(X^{\frac{3}{4}+\epsilon}),
\end{align}
where in the last step we used the assumption $a \leq \frac{3}{2}.$
\subsection{Sum of error terms}

\parindent=10pt Now we will focus on the error term in equation \eqref{eq:separate main and error term}, i.e., we will analyze the following sum:

$$\sum_{K \in \mathcal{F}_n(F,H,X^{1/2})}  \#\rcg{K}{2}\cdot |\ds{K}|^{\frac{1}{mn+1}}\cdot\left(\frac{X}{\n{K}{F}{\Q}|^2}\right)^{1-\frac{2}{mn+1}+\epsilon} .$$

First, we will separate the sum into two parts, namely:
\begin{align*}
     \sum_{K \in \wt{F}{H}{Z}{n}}  
    +\sum_{K \in \mathcal{F}_n(F,H,X^{1/2}) \setminus \wt{F}{H}{Z}{n}}  
\end{align*}

Let us analyze the first part of this sum in detail. In 2017 \cite{bhargava2017bounds2torsionclassgroups}, Manjul Bhargava, Arul Shankar, Takashi Taniguchi, Frank Thorne, Jacob Tsimerman, and Yongqiang Zhao obtained an upper bound for the size of $2$-torsion subgroup of the class group which we will use. Let us restate the main theorem of \cite{bhargava2017bounds2torsionclassgroups} here.

\begin{thm}[Bhargava, Shankar, Taniguchi, Thorne, Tsimerman, Zhao; 2020]\label{thm:class group bound}
The size of the $2-$torsion subgroup of the class group of a number field $E$ of degree $n$ is $\O_{\epsilon}( |\ds{E}|^{\frac{1}{2}-\delta_n+\epsilon} ).$  If $E$ is a cubic or quartic field, we can take $\delta_n=\delta=0.2784\dots.$ For $n >4 $, $\delta_n=\frac{1}{2n}.$

\end{thm}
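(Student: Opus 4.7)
The plan is to bound $\#\rcg{E}{2}$ by parametrizing each 2-torsion class via a small ideal generator and then counting lattice points. Every class in $\rcg{E}{2}$ has an integral representative $I \subseteq \E$ with $N(I) \leq c_n |\ds{E}|^{1/2}$ (Minkowski's bound on the class group), and the 2-torsion condition $I^2 = (\alpha)$ yields an element $\alpha \in \E$, uniquely determined modulo $\E^\times$, satisfying $|N_{E/\Q}(\alpha)| \leq c_n^2 |\ds{E}|$. Thus counting 2-torsion classes reduces to counting $\E^\times$-orbits of generators $\alpha \in \E$ subject to the (nontrivial) constraint that $(\alpha)$ be an ideal square.

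First I would fix a fundamental domain for the $\E^\times$-action on $(E \otimes \R)^\times$ and count lattice points $\alpha \in \E$ in the region $\{|N(\alpha)| \leq c_n^2 |\ds{E}|\}$ lying on the ``ideal-square'' locus. Using that $\E$ has covolume $|\ds{E}|^{1/2}$ in $E \otimes \R$, a direct lattice-point count gives the trivial bound $\#\rcg{E}{2} \ll |\ds{E}|^{1/2+\epsilon}$. To squeeze out the saving and reach $|\ds{E}|^{1/2 - 1/(2n) + \epsilon}$, the plan is to run a pigeonhole/averaging argument: if $\#\rcg{E}{2}$ were much larger than the target, then translates of a suitably shrunk Minkowski box would have to contain two distinct 2-torsion classes whose generators differ by a unit, a contradiction. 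Optimizing the shrinkage parameter against the hypothesized size of $\#\rcg{E}{2}$ is what produces the exact exponent $1/(2n)$.

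For the sharper cubic and quartic bound with $\delta = 0.2784\ldots$, the plan is to replace the crude lattice-point count by Bhargava's parametrization of 2-torsion ideal classes in cubic (resp.\ quartic) rings by integral $\mathrm{GL}_2(\Z)$-orbits (resp.\ $\mathrm{GL}_2(\Z) \times \mathrm{SL}_3(\Z)$-orbits) on spaces of integral forms, and then count orbits of bounded discriminant using Bhargava's averaging method together with a squarefree sieve over the form space. The numerical constant $0.2784\ldots$ emerges from a specific ratio of orbit-space volumes that is intrinsic to the binary cubic (resp.\ quartic) parametrization.

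The main obstacle is the pigeonhole/averaging step for general $n$: one must carefully control the unit-group action on the ideal-square locus inside $E \otimes \R$ and optimize the averaging parameter to extract precisely the exponent $1/(2n)$ rather than something weaker. The cubic and quartic improvement is an entirely separate invariant-theoretic argument, whose main difficulty lies in uniformly controlling the integral orbit count over the relevant fundamental domain; the ``magic'' value $\delta = 0.2784\ldots$ is forced by the shape of that domain and does not arise from the geometry-of-numbers method used for general $n$.
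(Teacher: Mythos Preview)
The paper does not prove this theorem: it is quoted from \cite{bhargava2017bounds2torsionclassgroups} and used only as a black-box input, so there is no proof here to compare your attempt against.

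For what it is worth, your opening setup matches the actual argument in \cite{bhargava2017bounds2torsionclassgroups}: represent each $2$-torsion class by an ideal $I$ with $N(I) \ll |\ds{E}|^{1/2}$, write $I^2=(\alpha)$ with $|N(\alpha)| \ll |\ds{E}|$, and reduce to counting such $\alpha$ modulo units. However, your description of how the exponent $1/(2n)$ is extracted is not the mechanism they use. It is not a pigeonhole argument forcing two distinct $2$-torsion classes to land in the same shrunken box and thereby differ by a unit; rather, the saving comes from exploiting the constraint that $(\alpha)$ is the square of an ideal of small norm, by introducing an auxiliary small element of $I$ and counting the resulting configurations directly. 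Likewise, the sharper cubic and quartic exponent in \cite{bhargava2017bounds2torsionclassgroups} is obtained by refining this same geometry-of-numbers count in low degree, not via Bhargava's orbit parametrizations on prehomogeneous spaces as you suggest; the value $0.2784\ldots$ arises from an optimization inside that refinement, not from a ratio of orbit-space volumes.
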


Using Theorem \ref{thm:class group bound} and Lemma \ref{lemma: main lemma}, we get 
\begin{align}\label{eq:bound first part of the error term}
& \sum_{K \in \wt{F}{H}{Z}{n}}  \#\rcg{K}{2}\cdot|\ds{K}|^{\frac{1}{mn+1}}\cdot\left(\frac{X}{|\n{K}{F}{\Q}|^2}\right)^{1-\frac{2}{mn+1}+\epsilon} \nonumber \\
&\ll_{n,F, \epsilon}  X^{1-\frac{2}{mn+1}+\epsilon} \sum_{K \in \wt{F}{H}{Z}{n}} |\n{K}{F}{\Q}|^{\frac{1}{2}(1-\frac{1}{mn})+\epsilon-2+\frac{5}{mn+1}} \nonumber \\
&\ll_{n,F, \epsilon}  X^{1-\frac{2}{mn+1}+\epsilon} \sum_{K \in \wt{F}{H}{Z}{n}} |\n{K}{F}{\Q}|^{-\frac{3}{2}-\frac{1}{2mn}+\frac{5}{mn+1}+\epsilon} \nonumber \\
& \ll_{n,F, \epsilon}  X^{1-\frac{2}{mn+1}+\epsilon}\cdot( Z^{a+\frac{5}{mn+1}-\frac{1}{2mn}-\frac{3}{2}+\epsilon}+1 ) \nonumber \\
& =X^{1-\frac{2}{mn+1}+\epsilon}\cdot(Z^{a-(\frac{3}{2} - \frac{9mn-1}{2mn(mn+1})+\epsilon}+1)
\end{align}

\begin{proof}[{Proof of Theorem} \ref{thm:main theorem}(i)]
Observe that if $a \leq \frac{3}{2}-\frac{9mn-1}{2mn(mn+1)}$ then by equation \eqref{eq:bound first part of the error term} the sum in the error term of equation \eqref{eq:separate main and error term}is bounded by $X^{1-\frac{2}{mn+1}+\epsilon}$ and this bound does not depend on the choice of $Z$. In particular, we can choose $Z=X^{\frac{1}{2}}$. Using this result and
combining equations \eqref{thm:count quadratic over H extension}, \eqref{eq:klunersbound}, \eqref{eq:separate main and error term}, and   \eqref{eqn:mainterm}, we get

\begin{align*}
\z{F}{C_2 \wr H}{X}{2n} =c(F, C_2 \wr H)X +\O(X^{1-\frac{2}{mn+1}+\epsilon}) +\O(X^{\frac{3}{4}+\epsilon}).    
\end{align*}

This completes the proof of the first part of Theorem \ref{thm:main theorem}.
\end{proof}

Now we will analyze the case $a >\frac{3}{2}-\frac{9mn-1}{2mn(mn+1)}$ in greater detail. We need to introduce the following notion:

\begin{defn}
For a number field $E$, let $V(E)$ denote the set of all $u \in E^{\times}$ such that $(u)=\a^2$ for some ideal $\a \subseteq \E.$ We define the 2-Selmer group of $E$ to be $S_2(E) \coloneqq V(E) / (E^{\times})^2.$
\end{defn}
By Lemma 3.2 of \cite{cohend4}, we have $\#\mathrm{S}_2(E) \ll \#\mathrm{Cl}_E[2]$, where the implied constant depends on the degree of the extension. Using Theorem \ref{thm:class group bound} we get
\begin{equation}\label{eqn4.11}
    \#S_2(E) \ll |\ds{E}|^{\frac{1}{2}-\delta_n + \epsilon},
\end{equation}
where $n$ is the degree of the number field $E.$
Furthermore, let $A(E)$ denote the set of squarefree ideals $\mathfrak{a}$ such that $\overline{\mathfrak{a}} \in \text{Cl}_E^2.$ Then, there exists a bijection 
$$ A(E) \times S_2(E) \to \{[L:E]=2 \} .$$ Furthermore, if $(\a, \, \overline{u})$ corresponds to the extension $L /E $ under this map then the relative discriminant of the image is given by 
\begin{equation*}
    \D{L}{E}= \frac{4\a}{\c^2},
\end{equation*}
where $\c$ is defined as follows: let $\a \mathfrak{q}^2=(\alpha_0)$ with $(\mathfrak{q}, \, 2)=1$; then $\c$ is the largest ideal such that $\c \mid 2, \, (\c, \, \a)=1$ and $x^2 \equiv \alpha_0u \, (\text{mod } \c^2)$ is solvable. We refer the reader to \cite{cohend4} for a proof of these results.

We will require Theorem 3 of \cite{lowryduda2018uniformboundslatticepoint} which counts the number of ideals in the ring of integers of a number field $E$ with absolute norm bounded by $X$:
\begin{thm}[Lowry-Duda, Taniguchi, Thorne; 2022]\label{thm:bound ideals with fixed norm}
Suppose $E$ is a number field of degree $d \geq 1.$ Then, for $X \geq 2 $ the number of integral ideals $\mathfrak{a}$ with $|\mathrm{Nm}_{E/\Q}(\mathfrak{a})| < X$ satisfies the estimate

\begin{equation*}
\# \{\mathfrak{a}: |\mathrm{Nm}_{E/\Q}(\mathfrak{a})| < X  \} = \frac{2^{r(E)}(2\pi)^{i(E)}h R}{w|\ds{E}|^{\frac{1}{2}}}X+ \O( |\ds{E}|^{\frac{1}{d+1}}X^{\frac{d-1}{d+1}}(\log X)^{d-1})
\end{equation*}
where the implied constant depends only on $d.$  
\end{thm}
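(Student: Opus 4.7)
The plan is to reduce the ideal count to a uniform lattice-point counting problem in Minkowski space, following the classical geometry-of-numbers approach while carefully tracking the dependence on the field discriminant.

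First I would partition integral ideals by ideal class, writing
$$\#\{\mathfrak{a}: |\mathrm{Nm}_{E/\Q}(\mathfrak{a})|<X\} = \sum_{[\mathfrak{c}]\in \mathrm{Cl}_E} \#\{\mathfrak{a}\in[\mathfrak{c}]:|\mathrm{Nm}_{E/\Q}(\mathfrak{a})|<X\}.$$
Fixing a representative $\mathfrak{b}\in [\mathfrak{c}]^{-1}$, the map $\alpha \mapsto \alpha\mathfrak{b}^{-1}$ identifies nonzero $\alpha\in \mathfrak{b}$ modulo $\mathcal{O}_E^\times$ with nonzero ideals in the class $[\mathfrak{c}]$. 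Under the Minkowski embedding $E\hookrightarrow F_\R:= E\otimes_\Q\R \cong \R^d$, the ideal $\mathfrak{b}$ becomes a lattice of covolume $\asymp |\ds{E}|^{1/2}|\mathrm{Nm}_{E/\Q}(\mathfrak{b})|$. Dirichlet's unit theorem, applied via the logarithmic embedding $\log:F_\R^\times \to \R^{r(E)+i(E)}$, furnishes a fundamental domain $\mathcal{D}\subset F_\R^\times$ for the action of $\mathcal{O}_E^\times/\mu(E)$ whose intersection with the norm body $\{x\in F_\R: |\mathrm{Nm}(x)|\leq Y\}$ has volume $\asymp 2^{r(E)}(2\pi)^{i(E)} R \cdot Y$, so the count for each ideal class becomes
$$\tfrac{1}{w}\#\bigl(\mathfrak{b}\cap \mathcal{D}\cap \{x \in F_\R: |\mathrm{Nm}(x)|\leq |\mathrm{Nm}_{E/\Q}(\mathfrak{b})| X\}\bigr).$$

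For the main term, dividing the above volume by the covolume of $\mathfrak{b}$ and summing over the $h$ ideal classes produces the stated constant $\frac{2^{r(E)}(2\pi)^{i(E)} hR}{w|\ds{E}|^{1/2}}X$, once the $|\mathrm{Nm}_{E/\Q}(\mathfrak{b})|$ factors cancel. For the error term, I would apply a uniform Davenport-type lattice-point counting lemma to the region $\mathcal{D}\cap\{|\mathrm{Nm}|\leq Y\}$, which is Lipschitz-parametrizable over a box whose side lengths in the logarithmic directions are $\O(R) = \O((\log |\ds{E}|)^{d-1})$; this is the source of the $(\log X)^{d-1}$ factor in the final error. By Minkowski's inequality, the first successive minimum of the lattice $\mathfrak{b}$ is bounded below by $\asymp |\ds{E}|^{1/(2d)}|\mathrm{Nm}_{E/\Q}(\mathfrak{b})|^{1/d}$; introducing a scale parameter to interpolate between the volumetric term and the boundary contribution, and then optimizing against this successive minimum, yields the exponents $(d-1)/(d+1)$ on $X$ and $1/(d+1)$ on $|\ds{E}|$.

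The main obstacle is obtaining the exponent $1/(d+1)$ rather than the $1/d$ that a direct application of the classical Davenport lemma would produce. I expect this sharpening requires either a refined lattice-point counting estimate that fully exploits all successive minima (in the spirit of Schmidt's refinement of Davenport), or a smoothed-sum approach using a compactly supported test function whose smoothing scale is tuned to the lattice covolume, so that the Fourier-dual error also scales favorably in $|\ds{E}|$. Additional care is needed to sum the error uniformly over the $h$ ideal classes while preserving uniformity in $|\ds{E}|$; this is manageable because each class-representative $\mathfrak{b}$ may be chosen with $|\mathrm{Nm}_{E/\Q}(\mathfrak{b})| \ll |\ds{E}|^{1/2}$ by Minkowski reduction, so the cumulative contribution of all classes matches the advertised bound.
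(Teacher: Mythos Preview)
The paper does not prove this statement: it is quoted, with attribution, as Theorem~3 of \cite{lowryduda2018uniformboundslatticepoint} and then invoked as a black box in the proof of Theorem~\ref{thm:main theorem}(ii). There is therefore no in-paper argument to compare your proposal against.

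For what it is worth, your outline is broadly the strategy of Lowry-Duda--Taniguchi--Thorne: partition by ideal class, pass to a lattice-point count in a fundamental domain for the unit action on Minkowski space, and control the boundary contribution uniformly in the discriminant. Two details in your sketch are inaccurate, however. First, Minkowski's theorem gives an \emph{upper} bound $\lambda_1 \ll (|\ds{E}|^{1/2}\,|\mathrm{Nm}(\mathfrak{b})|)^{1/d}$ on the first successive minimum, not the lower bound you state; the only general lower bound for ideal lattices is $\lambda_1 \gg |\mathrm{Nm}(\mathfrak{b})|^{1/d}$, coming from the AM--GM inequality applied to the norm form, with no discriminant factor. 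Second, the $(\log X)^{d-1}$ does not arise from a bound of the form $R=\O((\log|\ds{E}|)^{d-1})$---no such uniform bound on the regulator holds---but from the cuspidal geometry of the fundamental domain: in logarithmic coordinates the region $\{|\mathrm{Nm}|\leq Y\}$ extends to length $\asymp \log Y$ in each of the $r(E)+i(E)-1\leq d-1$ unit directions, and this is what produces the log factors in the Lipschitz boundary count. The balancing that yields the exponent pair $\bigl(\tfrac{1}{d+1},\tfrac{d-1}{d+1}\bigr)$ is along the lines you indicate in your final paragraph.
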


\begin{proof}[Proof of Theorem \ref{thm:main theorem}(ii)]
Using Theorem \ref{thm:bound ideals with fixed norm} and Lemma \ref{lemma: main lemma}, we get the following upper bound
\begin{align}\label{eq:bound second part of the error term}
&\sum_{K \in \mathcal{F}_n(F,H,X^{1/2}) \setminus \wt{F}{H}{Z}{n}} \sum_{L \in \wt{K}{C_2}{X/|\n{K}{F}{\Q}|^2}{2}} 1   \nonumber \\
\leq & \sum_{K \in \mathcal{F}_n(F,H,X^{1/2}) \setminus \wt{F}{H}{Z}{n}} \sum_{\substack{\mathfrak{a} \in A(K) \\ |\mathrm{Nm}_{K/\Q}({\mathfrak{a})}|\leq  X/\n{K}{F}{\Q}|^2}} \sum_{\overline{u} \in S_2(K)} 1 \nonumber \\
\ll & \sum_{K \in \mathcal{F}_n(F,H,X^{1/2}) \setminus \wt{F}{H}{Z}{n}} \#S_2(K) \sum_{\substack{\mathfrak{a} \in A(K) \\ |\mathrm{Nm}_{K/\Q}({\mathfrak{a})}|\leq  X/|\n{K}{F}{\Q}|^2}} 1 \nonumber \\
\ll & \sum_{K \in \mathcal{F}_n(F,H,X^{1/2}) \setminus \wt{F}{H}{Z}{n}}  |\ds{K}|^{\frac{1}{2}-\frac{1}{2mn} +\epsilon}  \, \frac{X}{|\n{K}{F}{\Q}|^2}  \nonumber \\
\ll & X \sum_{K \in \mathcal{F}_n(F,H,X^{1/2}) \setminus \wt{F}{H}{Z}{n}} |\n{K}{F}{\Q}|^{-\frac{3}{2}-\frac{1}{2mn}+\epsilon}\nonumber\\
\ll & X Z^{-\frac{3}{2}-\frac{1}{2mn}+a+\epsilon}+X^{\frac{1}{4}-\frac{1}{4mn}+\frac{a}{2}+\epsilon}.
\end{align}

Therefore, comparing equations \eqref{eq:bound second part of the error term} and \eqref{eq:separate main and error term}, we get 
\begin{align}\label{eq:bound second part of the error term 2}
 & \sum_{K \in \mathcal{F}_n(F, \, H, \, X^{1/2}) \setminus \wt{F}{H}{Z}{n}} \#\rcg{K}{2}\cdot   |\ds{K}|^{\frac{1}{mn+1}}\cdot\left(\frac{X}{|\n{K}{F}{\Q}|^2}\right)^{1-\frac{2}{mn+1}+\epsilon}  \nonumber \\
 & \ll X Z^{-\frac{3}{2}-\frac{1}{2mn}+a+\epsilon}+X^{\frac{1}{4}-\frac{1}{4mn}+\frac{a}{2}+\epsilon}. 
\end{align}

Now, let us observe that, under our assumption $a >\frac{3}{2}-\frac{9mn-1}{2mn(mn+1)}$, the dominant term in the final expression of equation \eqref{eq:bound first part of the error term} is $X^{1-\frac{2}{mn+1}+\epsilon}Z^{a-(\frac{3}{2} - \frac{9mn-1}{2mn(mn+1})+\epsilon}.$ Thus, combining equations \eqref{eq:bound first part of the error term}, and \eqref{eq:bound second part of the error term 2} we get the following bound for the error term

\begin{align*}
& \sum_{K \in \mathcal{F}_n(F,H,X^\frac{1}{2})}  \#\rcg{K}{2}\cdot |\ds{K}|^{\frac{1}{mn+1}}\cdot\left(\frac{X}{\n{K}{F}{\Q}|^2}\right)^{1-\frac{2}{mn+1}+\epsilon} \\ \ll &   X^{1-\frac{2}{mn+1}+\epsilon}Z^{a-(\frac{3}{2} - \frac{9mn-1}{2mn(mn+1})+\epsilon}+X Z^{-\frac{3}{2}-\frac{1}{2mn}+a+\epsilon}+X^{\frac{1}{4}-\frac{1}{4mn}+\frac{a}{2}+\epsilon}. 
\end{align*}
This is optimized when we take $Z= X^{\frac{2}{5}}$, in which case we get  

\begin{align}\label{eqn:err2}
 & \sum_{K \in \mathcal{F}_n(F,H,X^{1/2)}}  \#\rcg{K}{2}\cdot |\ds{K}|^{\frac{1}{mn+1}}\cdot\left(\frac{X}{\n{K}{F}{\Q}|^2}\right)^{1-\frac{2}{mn+1}+\epsilon} \nonumber \\ \ll &  X^{ \frac{2}{5}(1+a-\frac{1}{2mn})+\epsilon} + X^{\frac{1}{4}-\frac{1}{4mn}+\frac{a}{2}+\epsilon}. 
\end{align}
Therefore, combining equations \eqref{thm:count quadratic over H extension}, \eqref{eq:klunersbound}, \eqref{eq:separate main and error term}, \eqref{eqn:mainterm}, and \eqref{eqn:err2}, we get

\begin{align}\label{eqn4.14}
   \z{F}{C_2 \wr H}{X}{2n}  = c(F, C_2 \wr H)X + \O(X^{\frac{1}{4}-\frac{1}{4mn}+\frac{a}{2}+\epsilon})+\O(X^{\frac{2}{5}(1+a-\frac{1}{2mn})+\epsilon})+\O(X^{\frac{3}{4}+\epsilon}).
\end{align}

Finally, observe that if $a < \frac{3}{2}+\frac{1}{2mn}$, then the dominant error term in equation \eqref{eqn4.14} is $X^{\frac{2}{5}(1+a-\frac{1}{2mn})+\epsilon}$. This completes the proof of Theorem \ref{thm:main theorem}.
\end{proof}

\section{Analyzing the case \texorpdfstring{$H \cong S_3$}{TEXT}}
We wish to explore the case $H \cong S_3$ in greater detail in this section. In this case, we have 
\begin{align}\label{eq:count quadratic over S3}
\sum_{K \in \mathcal{F}_6(\Q, \, S_3, \, X^{1/2})} \sum_{L \in \wt{K}{C_2}{X/|\ds{K}|^2}{2}} 1 =  & \alpha(S_3)\z{\Q}{C_2 \wr S_3}{X}{12}  +E_{12}(\Q, C_2 \wr S_3, X)
\end{align}

We intend to provide a better bound for $E_{12}(\Q, C_2 \wr S_3, X)$ than the generic one given in equation \eqref{eq:klunersbound}. We will require Theorem 3 of \cite{bhar}, which evaluates the density of Sextic-$S_3$ fields parametrized by discriminants, which we restate here:

\begin{thm}[Bhargava, Wood; 2007]\label{thm:bound for S3 fields}
We have
    \begin{equation*}
   \z{F}{S_3}{X}{6} \ll X^{\frac{1}{3}}.
   \end{equation*}

\end{thm}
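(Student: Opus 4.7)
The plan is to use the Galois correspondence for $S_3$-sextics, combined with the conductor–discriminant formula, to reduce the count to a count of non-Galois cubic extensions of $F$ under a refined discriminant constraint, and then to apply a dyadic decomposition over the square part of the cubic discriminant.

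Since $S_3 \hookrightarrow S_6$ is the regular representation, each $K \in \y{F}{S_3}{6}$ is already Galois of degree six over $F$ with $\gal{K/F} \cong S_3$. By the Galois correspondence $K$ has a unique quadratic subfield $L = K^{A_3}$ and three mutually conjugate non-Galois cubic subfields $E \subset K$, all isomorphic over $F$, and the assignment $K \mapsto [E]$ is a bijection between $\y{F}{S_3}{6}$ and the set of non-Galois cubic extensions of $F$ up to isomorphism. Applying the conductor–discriminant formula to the irreducible characters $\mathbf{1},\,\mathrm{sgn},\,V$ of $S_3$ (with $V$ two-dimensional) yields $\D{K}{F} = \D{L}{F} \cdot \D{E}{F}^2$. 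Since $L$ coincides with $F(\sqrt{\mathrm{disc}(E/F)})$, the ideal quotient $\D{E}{F}/\D{L}{F}$ is a perfect square, so we may write $\D{E}{F} = \D{L}{F} \cdot \mathfrak{m}^2$ for a unique integral $\mathfrak{m} \subset \mathcal{O}_F$. Setting $d := |\mathrm{Nm}_{F/\Q}(\D{L}{F})|$ and $m := |\mathrm{Nm}_{F/\Q}(\mathfrak{m})|$, this rearranges to $|\mathrm{Nm}_{F/\Q}(\D{K}{F})| = d^3 m^4$.

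The problem is now to bound the number of non-Galois cubic $E/F$ satisfying $d(E)^3 m(E)^4 \leq X$. I would partition the count by dyadic ranges of $m(E)$: for a dyadic parameter $M \geq 1$, if $M \leq m(E) < 2M$ then $d^3 m^4 \leq X$ forces $|\mathrm{Nm}_{F/\Q}(\D{E}{F})| = d m^2 \leq X^{1/3}(2M)^{2/3}$, so
\begin{equation*}
\#\{E : d(E)^3 m(E)^4 \leq X,\; M \leq m(E) < 2M\} \ll \#\{E : m(E) \geq M,\; |\mathrm{Nm}_{F/\Q}(\D{E}{F})| \leq X^{1/3} M^{2/3}\}.
\end{equation*}

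The key analytic input is the uniform bound
\begin{equation*}
\#\{E/F \text{ non-Galois cubic}: m(E) \geq M,\; |\mathrm{Nm}_{F/\Q}(\D{E}{F})| \leq Y\} \ll_F \frac{Y}{M},
\end{equation*}
which for $F = \Q$ follows from Belabas–Bhargava–Pomerance-type estimates on cubic fields whose discriminants contain a prescribed square factor, and which extends to a general number field $F$ via the Delone–Faddeev/Datskovsky–Wright parametrization of cubic $\mathcal{O}_F$-orders by $\mathrm{GL}_2(\mathcal{O}_F)$-orbits of integral binary cubic forms, together with Bhargava's sieve for non-maximal orders. Granting this, the $M$-th dyadic block contributes $\ll X^{1/3} M^{-1/3}$, and summing the resulting convergent geometric series over dyadic $1 \leq M \leq X^{1/4}$ yields $\z{F}{S_3}{X}{6} \ll X^{1/3}$. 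The principal obstacle is precisely this uniform square-part estimate: the generic case $M = 1$ is essentially equivalent to the Datskovsky–Wright cubic field count and gives the exponent $1/3$ directly via $d \leq X^{1/3}$, but controlling the non-generic range $M \geq 2$ requires genuine sieving over cubic orders, which is the content of \cite{bhar}.
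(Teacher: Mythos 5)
The paper does not prove this statement at all: it is quoted verbatim as Theorem 3 of the cited Bhargava--Wood paper, so there is no internal proof to compare against. Your sketch is a faithful reconstruction of the Bhargava--Wood argument: the bijection between $S_3$-sextic Galois fields and non-Galois cubic fields, the conductor--discriminant identity $\D{K}{F}=\D{L}{F}\cdot\D{E}{F}^2$ together with $\D{E}{F}=\D{L}{F}\cdot\mathfrak{m}^2$, hence $|\mathrm{Nm}(\D{K}{F})|=d^3m^4$, and a dyadic sum over $m$ controlled by a uniformity estimate for cubic fields whose discriminant has a large square part --- and you correctly flag that this last estimate is the genuine content of the cited work (it can also be obtained, with an extra $M^{\epsilon}$ that is harmless here, by class field theory over the quadratic resolvent plus Davenport--Heilbronn/Datskovsky--Wright bounds on $\sum_L\#\mathrm{Cl}_L[3]$). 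The only caution is that the theorem as displayed is stated over a general base field $F$ while the cited result is over $\Q$; your appeal to the Datskovsky--Wright parametrization is the right way to extend it, but note the paper only ever invokes this bound with $F=\Q$.
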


First we will provide an upper bound for $E_{12}(\Q, C_2 \wr S_3, X)$. Suppose $G$ denotes a proper subgroup of $C_2 \wr S_3$. We will use Corollary 1.10 of \cite{inductivecounting} to prove the estimate for the number of fields having Galois group $G$. Observe that the action of $G$ is imprimitive as there are intermediate subextensions and $G$ has the tower type $(C_2, \, S_3)$ where $C_2 \leq S_2$ and $S_3 \leq S_6$ has transitive actions. Then, applying Theorems \ref{thm:class group bound} and \ref{thm:bound for S3 fields}, we have   
\begin{align}\label{eq:eq:4.3}
&\sum_{K \in \wt{\Q}{S_3}{X}{6}}\#\text{Hom}(\text{Cl}_K, C_2 ) =  \sum_{K \in \wt{\Q}{S_3}{X}{6}}\#\rcg{K}{2} 
\ll X^{\frac{3}{4}+\epsilon}.
\end{align}
Corollary 1.10 of \cite{inductivecounting} provides an upper bound for $\z{F}{G}{X}{m\#A}$ when the group $G$ is an imprimitive transitive permutation group and is a subgroup of $A \wr B$ for which $A$ is a finite abelian group and $B$ is a transitive permutation group of degree $m$ such that there exists at least one $B$ extension of $F.$ In particular, we can take $A=C_2$, $B=S_3$, and $\theta = \frac{3}{4}+\epsilon$ in the statement of Corollary 1.10 of \cite{inductivecounting}. If $a( G \cap C_2^6)\geq 3$ then $$\theta \geq \frac{3}{4}\geq \frac{2}{3} \geq \frac{\#A}{a( G\cap C_2^6)}.$$ In this case, by Corollary 1.10 of \cite{inductivecounting} we get 
 \begin{equation}\label{eqn5.3}
   \z{\Q}{G}{X}{12} \ll X^{\frac{3}{8}+\epsilon}.  
 \end{equation}
On the other hand, if $a(G \cap C_2^6) =2$ we have $\theta < \frac{\#A}{a( G \cap C_2^6)}.$ Then, we get 
\begin{equation}\label{eqn5.4}
    \z{\Q}{G}{X}{12} \ll X^{\frac{1}{2}}.
\end{equation}

Now let us assume $a(G \cap C_2^6)=1$. Then, we get $
\theta \leq 2.$ By Corollary 1.10 of \cite{inductivecounting}, we get 
$$\z{\Q}{G}{X}{12} \sim c(F, G) X (\log{X})^{b-1}$$
for some positive integer $b.$ However, Klüners proved in \cite{kluners} that $E_{12}(\Q, G, X) \ll X^{\frac{3}{4}+\epsilon}.$ This leads to a contradiction. Therefore, we get $a(G\cap C_2^6) \neq 1$ and hence, 
\begin{equation}\label{eq:bound subgroups C2 wreath S3}
    E_{12}(\Q, \, C_2 \wr S_3, \, X) \ll X^{\frac{1}{2}}.
\end{equation}

\begin{proof}[Proof of Theorem \ref{thm:main theorem S3}]
Applying Theorem \ref{thm:explicitC2} on the left-hand side of equation $\eqref{eq:count quadratic over S3}$, we get

\begin{align}\label{eq:separate main and error S3}
\sum_{K \in \mathcal{F}_6(\Q, \, S_3, \, X^{1/2})} \sum_{L \in \wt{K}{C_2}{X/|\ds{K}|^2}{2}}  1 = &\sum_{K \in \mathcal{F}_6(\Q, \, S_3, \, X^{1/2})} \frac{1}{2^{i(K)}}\cdot \frac{\mathrm{Res}_{s=1}\zeta_K(s)}{\zeta_K(2)}\cdot \frac{X}{|\ds{K}|^2} + \nonumber \\
&\sum_{K \in \mathcal{F}_6(\Q, \, S_3, \, X^{1/2})} \#\rcg{K}{2}\cdot  \O\left( |\ds{K}|^{\frac{1}{7}}\cdot\left(\frac{X}{|\ds{K}|^2}\right)^{\frac{5}{7}+\epsilon}\right).
\end{align}
Observe that by Theorem \ref{thm:bound for S3 fields} we have $a= \frac{1}{3}$ in this case. Thus, by equation \eqref{eqn:mainterm}, the main term is given as follows:

\begin{align}\label{eqn:maintermS3}
    &  \sum_{K \in \mathcal{F}_6(\Q, \, S_3, \, X^{1/2})} \frac{1}{2^{i(K)}}\cdot \frac{\mathrm{Res}_{s=1}\zeta_K(s)}{\zeta_K(2)}\cdot \frac{X}{|\ds{K}|^2} = c(\Q, C_2 \wr S_3)X+\O(X^{\frac{1}{6}+\epsilon}).
\end{align}
Furthermore, we have $\frac{1}{3}< \frac{3}{2} - \frac{9\cdot6-1}{2\cdot6(6+1)}$. Therefore, in this case, by equation \eqref{eq:bound first part of the error term}, the error term will be
\begin{align}\label{eqnerrs3}
\sum_{K \in \mathcal{F}_6(\Q, \, S_3, \, X^{1/2})} \#\rcg{K}{2}\cdot  |\ds{K}|^{\frac{1}{7}}\cdot\left(\frac{X}{|\ds{K}|^2}\right)^{\frac{5}{7}+\epsilon} \ll X^{\frac{5}{7}+\epsilon}  
\end{align}
Combining equations \eqref{eq:count quadratic over S3}, \eqref{eq:bound subgroups C2 wreath S3},  \eqref{eq:separate main and error S3}, \eqref{eqn:maintermS3}, and \eqref{eqnerrs3} we get 
$$\z{\Q}{C_2 \wr S_3}{X}{12}= c(\Q,C_2 \wr S_3)X+\O(X^{\frac{5}{7}+\epsilon}).$$
This completes the proof. 
\end{proof}

\section{Analyzing the case \texorpdfstring{$F= \mathbb{Q}, \, H \cong C_3$}{TEXT}}

This section aims to count the $C_2 \wr C_3$ fields of degree $6$ over the base field $\Q$, i.e, we wish to evaluate $\z{\Q}{C_2 \wr C_3}{X}{6}.$ First, let us restate equation \eqref{thm:count quadratic over H extension} for the specific case of $H \cong C_3.$ 

\begin{align}\label{eq:count C2 over C3}
\sum_{K \in \mathcal{F}_3(\Q, \, C_3, \, X^{1/2})} \sum_{L \in \wt{K}{C_2}{X/|\ds{K}|^2}{2}} 1 =  & \alpha(C_3)\z{\Q}{C_2 \wr C_3}{X}{6}  +E_{6}(F, C_2 \wr C_3, X)
\end{align}
First, let us evaluate the constant $\alpha(C_3)$. 
\begin{lem}
We have $\alpha(C_3)=1.$    
\end{lem}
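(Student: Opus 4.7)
The plan is to interpret $\alpha(C_3)$ via the Galois correspondence and show that for any $C_2 \wr C_3$-extension $L/\Q$ of degree $6$, there is exactly one intermediate field $K$ with $\gal{K/\Q} \cong C_3$ and $[L:K] = 2$. By the definition of $\alpha(H)$ appearing in equation \eqref{thm:count quadratic over H extension}, this multiplicity is precisely what we must compute: it is the number of pairs $(K, L)$ contributing to the left-hand side of \eqref{eq:count C2 over C3} for a fixed $L$ with $\gal{\widetilde{L}/\Q} \cong C_2 \wr C_3$.

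So first I would write $G = \gal{\widetilde{L}/\Q} \cong C_2 \wr C_3 = C_2^3 \rtimes C_3$ of order $24$, acting faithfully and transitively on $6$ points via the wreath product action. The field $L$ corresponds to a point stabilizer $H_0 \leq G$ of order $4$; explicitly, identifying the six letters with pairs $(i, \epsilon) \in \{1,2,3\} \times \{0,1\}$ and writing $G = \langle a, b, c \rangle \rtimes \langle \sigma \rangle$ with $a, b, c$ the three commuting involutions and $\sigma$ cyclically permuting them, the stabilizer of $(1,0)$ is $H_0 = \langle b, c \rangle \subseteq N$, where $N = C_2^3 = \langle a, b, c \rangle$ is the base group of the wreath product.

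Next, any intermediate field $K$ with $[K:\Q] = 3$ corresponds to a subgroup $K_0 \leq G$ with $H_0 \subseteq K_0$ and $[G:K_0] = 3$, i.e.\ $|K_0| = 8$. Since $N \trianglelefteq G$ is the (unique) Sylow $2$-subgroup of $G$ -- uniqueness follows from normality, or directly from $n_2 \mid 3$ and $n_2 \equiv 1 \pmod 2$ together with $N \trianglelefteq G$ -- every subgroup of order $8$ in $G$ equals $N$. Since $H_0 \subseteq N$ automatically, there is exactly one such $K_0$, namely $K_0 = N$, and hence exactly one such intermediate field $K = \widetilde{L}^N$.

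Finally I would check that this $K$ actually does have the required Galois type: since $N \trianglelefteq G$ with $G/N \cong C_3$, the field $K$ is Galois over $\Q$ with $\gal{K/\Q} \cong C_3$, so the pair $(K, L)$ indeed contributes to the left-hand side of \eqref{eq:count C2 over C3}. Therefore $\alpha(C_3) = 1$. There is no real obstacle here -- the argument is pure group theory once the dictionary between the counting problem and the Galois correspondence is made explicit; the only point requiring a moment's care is verifying $H_0 \subseteq N$, which is immediate from the definition of the wreath product action.
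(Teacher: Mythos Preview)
Your proof is correct and follows essentially the same route as the paper: both arguments reduce the question to the fact that $C_2 \wr C_3$ has a unique subgroup of order $8$, which via the Galois correspondence forces a unique intermediate cubic $K$. The only cosmetic difference is that the paper invokes the isomorphism $C_2 \wr C_3 \cong C_2 \times A_4$ to assert this uniqueness, whereas you obtain it directly from Sylow theory (the base group $N=C_2^3$ being a normal, hence unique, Sylow $2$-subgroup).
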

\begin{proof}
Suppose $L/ \Q $ is an extension of degree $6$ with Galois group $\gal{\widetilde{L}/ \Q} \cong C_2 \wr C_3.$ Working towards a contradiction, let us assume that the left-hand side of equation \eqref{eq:count C2 over C3} counts $L/ \Q$ with a multiplicity of more than one. This is possible only if there exists at least two subextensions of the form  $L/K_i/ \Q$ for $i=1, \, 2$ with $K_i/ \Q$ being cubic $C_3$ extensions and $L/K_i$ are quadratic extensions. Then, by the Fundamental Theorem of Galois theory, $\gal{\widetilde{L}/K_1}$ and  $\gal{\widetilde{L}/K_2}$ are two distinct subgroups of $C_2 \wr C_3$ with cardinality $8.$
However, $C_2 \wr C_3 \cong C_2 \times A_4$ has only one subgroup of order 8. This leads to a contradiction. Therefore $\alpha(C_3)=1.$
\end{proof}
\subsection{Bound for \texorpdfstring{$E_6(\Q, \, C_2 \wr C_3, \, X)$}{TEXT}}

\parindent=10pt Now, let us analyze $E_6(\Q, \, C_2 \wr C_3 , \, X).$ Suppose $K/ \Q$ is a number field of degree $3$ and $L/ K$ is a field extension of degree $2.$ Let $\widetilde{L}$ be the Galois closure of $L/ \Q.$

Then, $ \gal{\widetilde{L} / \Q}$ is a transitive subgroup of $S_6$ and a subgroup of $C_2 \wr C_3 \cong C_2 \times A_4.$ If $\gal{\widetilde{L} / \Q}$ is a proper subgroup of $C_2 \wr C_3$, then the only possibilities are $\gal{\widetilde{L} / \Q} \cong C_6$ or $\gal{\widetilde{L} / \Q} \cong A_4.$ Using Theorem I.2 of \cite{wrightdensity}, we get the following upper bounds
\begin{equation}\label{eqn:boundC3}
\z{\Q}{C_3}{X}{3} \ll X^{\frac{1}{2}+\epsilon},    \end{equation}
and  
\begin{equation}\label{eqn6.6}
    \z{\Q}{C_6}{X}{6} \ll X^{\frac{1}{3}+\epsilon}.
\end{equation}

For $A_4$, the Malle's constant is given by $a(A_4)= \frac{1}{2}.$ Observe that $A_4 \leq S_6$ has an imprimitive action. Furthermore, $ A_4$ has a tower type $(C_2, \, C_3)$ where $C_2 \leq S_2$ and $C_3 \leq S_3$ has primitive action. $C_2$ is a finite abelian group and $C_2^3 \cap A_4 = V_4.$  We have 

\begin{align*}
\displaystyle    & \sum_{F \in \z{\Q}{C_3}{X}{3}}  \#\text{Hom}(\text{Cl}_F, \, C_2 )  = \sum_{F \in \z{\Q}{C_3}{X}{3}} \# \text{Cl}_F[2]  \ll  X^{\frac{1}{2}+ \delta +\epsilon}.
\end{align*}
Hence, we can choose $A=C_2$ and $\theta= \frac{1}{2}+ \delta+\epsilon$ in the statement of Corollary 1.10 of \cite{inductivecounting}. Furthermore, we have $a(V_4)= 2.$ Clearly, $\theta < \frac{\#C_2}{a(V_4)}=1$. Thus, by Corollary 1.10 of \cite{inductivecounting}, we get 

\begin{equation}\label{eqn6.7}
    \z{\Q}{A_4}{X}{6} \ll X^{\frac{1}{2}+\epsilon}.
\end{equation}
Combining equations \eqref{eqn6.6} and \eqref{eqn6.7}, we get 

\begin{equation}\label{eq:bound subgroups C2 wreath C3}
E_6(\Q, \, C_2 \wr C_3, \, X) \ll X^{\frac{1}{2}+\epsilon}.
\end{equation}

\subsection{Main term for \texorpdfstring{$\z{\Q}{C_2 \wr C_3}{X}{6}$}{Text}}

\parindent=10pt Now, let us analyse the left-hand side of equation \eqref{eq:count C2 over C3}Applying Theorem \ref{thm:explicitC2}, we get 

\begin{align}\label{eq:separate main and error term C3}
&\sum_{K \in \mathcal{F}_3(\Q,\, C_3, \,X^{1/2})} \sum_{L \in \wt{K}{C_2}{X/|\ds{K}|^2 }{2}} 1 \nonumber \\
=  &\sum_{K \in \mathcal{F}_3(\Q,\, C_3, \,X^{1/2})} \frac{1}{2^{i(K)}}\cdot \frac{\mathrm{Res}_{s=1}\zeta_K(s)}{\zeta_K(2)}\cdot \frac{X}{|\ds{K}|^2}  + \nonumber \\
&\sum_{K \in \mathcal{F}_3(\Q,\, C_3, \,X^{1/2})} \#\rcg{K}{2}\cdot \O\left( |\ds{K}|^{\frac{1}{4}}\cdot (\log(|\ds{K}|))^2\cdot \left(\frac{X}{|\ds{K}|^2}\right)^{\frac{1}{2}+\epsilon} \right).
\end{align}

Observe that in this case, by equation \eqref{eqn:boundC3} we can take $a=\frac{1}{2}$. Now, using equation \eqref{eqn:mainterm}, we conclude that the main term is given by
\begin{equation}\label{eq:main term C3}
\sum_{K \in \mathcal{F}_3(\Q,\, C_3, \,X^{1/2})} \frac{1}{2^{i(K)}}\cdot \frac{\mathrm{Res}_{s=1}\zeta_K(s)}{\zeta_K(2)}\cdot \frac{X}{|\ds{K}|^2} =c(\Q, C_2 \wr C_3)X+\O(X^{\frac{1}{4}+\epsilon}).
\end{equation}

\subsection{The error term in counting \texorpdfstring{$C_2 \wr C_3$}{TEXT} fields}

\parindent=10pt Now, let us focus on the error term in equation \eqref{eq:separate main and error term C3}. First, let us separate the sum into two parts.

\begin{equation*}
 \sum_{K \in \wt{\Q}{C_3}{Z}{3}} E + \sum_{K \in \wt{\Q}{C_3}{X^{1/2}}{3} \setminus \wt{\Q}{C_3}{Z}{3}} E  
\end{equation*}
where $E$ denotes the summand in the second term of equation \eqref{eq:separate main and error term C3}. Later, we will choose a suitable value for $Z$ to get the best possible error term. 

Suppose $\delta$ is a positive constant such that for all cubic fields $K$, we have

\begin{equation*}
    \#\rcg{K}{2}\ll |\ds{K}|^{\delta+\epsilon}.
\end{equation*}
Then, by using Lemma \ref{lemma: main lemma}, the first part in the above sum can be bounded as follows:

\begin{align}\label{eqn6.4}
&\sum_{K \in \wt{\Q}{C_3}{Z}{3}} \#\rcg{K}{2}\cdot  \O\left( |\ds{K}|^{\frac{1}{4}}\cdot (\log(| \ds{K}|))^2\cdot \left(\frac{X}{|\ds{K}|^2}\right)^{\frac{1}{2}+\epsilon}. \right) \nonumber \\
& \ll X^{\frac{1}{2}+\epsilon}\cdot \sum_{K \in \wt{\Q}{C_3}{Z}{3}} |\ds{K}|^{-\frac{3}{4}+\delta+\epsilon} \nonumber \\
& \ll X^{\frac{1}{2}+\epsilon}\cdot( Z^{-\frac{1}{4}+\delta+\epsilon}+1).
\end{align}
We remark that, by Theorem \ref{thm:class group bound} the best possible  value for $\delta$ is $0.2784\dots.$ Thus, the dominating term in equation \eqref{eqn6.4} is $X^{\frac{1}{2}+\epsilon}Z^{-\frac{1}{4}+\delta+\epsilon}.$

Next, to bound the second part of the error term, we use the following weaker bound 

\begin{align}\label{eqn6.5}
& \sum_{K \in \mathcal{F}_3(\Q,\, C_3, \,X^{1/2}) \setminus\mathcal{F}_3(\Q,\, C_3, \,Z)} \sum_{L \in \wt{K}{C_2}{X/|\ds{K}|^2}{2}} 1  \nonumber \\ 
\leq  &\sum_{K \in \mathcal{F}_3(\Q,\, C_3, \,X^{1/2}) \setminus\mathcal{F}_3(\Q,\, C_3, \,Z)} \sum_{\substack{\mathfrak{a} \in A(K) \\ |\mathrm{Nm}_{K/\Q}(\mathfrak{a})|\leq  X/|\ds{K}|^2}} \sum_{\overline{u} \in S_2(K)} 1 \nonumber \\
\ll & \sum_{K \in \mathcal{F}_3(\Q,\, C_3, \,X^{1/2}) \setminus\mathcal{F}_3(\Q,\, C_3, \,Z)} |\ds{K}|^{\delta +\epsilon} \, \frac{X}{\ds{K}|^2}  \nonumber \\
\ll & X Z^{-\frac{3}{2}+\delta+\epsilon}.
\end{align}
By comparing equations \eqref{eq:separate main and error term C3} and \eqref{eqn6.5}, we get

\begin{align}\label{eq:bound second part of error term C3}
    \sum_{K \in \mathcal{F}_3(\Q,\, C_3, \,X^{1/2}) \setminus \wt{\Q}{C_3}{Z}{3}} \#\rcg{K}{2}\cdot  |\ds{K}|^{\frac{1}{4}}\cdot (\log(|\ds{K}|))^2\cdot \left(\frac{X}{|\ds{K}|^2}\right)^{\frac{1}{2}+\epsilon} \ll X Z^{-\frac{3}{2}+\delta+\epsilon}.
\end{align}

\begin{proof}[Proof of Theorem \ref{thm:main theorem C3} ]
Combining equations \eqref{eqn6.4} and \eqref{eq:bound second part of error term C3}, we get 

\begin{align}
& \sum_{K \in \mathcal{F}_3(\Q,\, C_3, \,X^{1/2})} \#\rcg{K}{2}\cdot  |\ds{K}|^{\frac{1}{4}}\cdot (\log(|\ds{K}|))^2\cdot \left(\frac{X}{|\ds{K}|^2}\right)^{\frac{1}{2}+\epsilon} \nonumber \\
\ll   &X^{\frac{1}{2}+\epsilon}\cdot Z^{-\frac{1}{4}+\delta+\epsilon}+X. Z^{-\frac{3}{2}+\delta+\epsilon}
\end{align}
This is optimised when $Z=X^{\frac{2}{5}}$, which gives us
\begin{align}\label{eq:errC3}
& \sum_{K \in \mathcal{F}_3(\Q,\, C_3, \,X^{1/2})} \#\rcg{K}{2}\cdot  |\ds{K}|^{\frac{1}{4}}\cdot (\log(|\ds{K}|))^2\cdot \left(\frac{X}{|\ds{K}|^2}\right)^{\frac{1}{2}+\epsilon} \ll X^{\frac{2\left(1+\delta \right)}{5}+\epsilon} .
\end{align}
Finally, combining equations \eqref{eq:count C2 over C3}, \eqref{eq:bound subgroups C2 wreath C3}, \eqref{eq:separate main and error term C3}, \eqref{eq:main term C3}, and \eqref{eq:errC3}, we get
\begin{equation*}
    \z{\Q}{C_2 \wr C_3}{X}{6}=c(\Q, C_2 \wr C_3)X+\O(X^{\frac{2\left(1+\delta \right)}{5}+\epsilon}).
\end{equation*}
This completes the proof of Theorem \ref{thm:main theorem C3}.
\end{proof}

\section*{Acknowledgment}
I am grateful to my advisor, Professor Alina Bucur, for her guidance and support throughout this project. I also thank Professor Brandon Alberts for valuable discussions and for his comments on an earlier draft of this paper. I further thank Professors Amanda Tucker and Kevin J. McGown for their helpful suggestions and assistance.

\bibliographystyle{plain}
\bibliography{biblio.bib}

\end{document}